\nonstopmode \numberwithin{equation}{section}
\newtheorem{theorem}{Theorem}[section]
\newtheorem{corollary}{Corollary}[section]
\newtheorem{remark}{Remark}[section]
\newtheorem{problem}{Problem}
\begin{document}
\bibliographystyle{amsplain}

\title{{Convexity of the Generalized Integral Transform and Duality Techniques }}

\author{
Satwanti Devi
}
\address{
Department of  Mathematics  \\
Indian Institute of Technology, Roorkee-247 667,
Uttarakhand,  India
}
\email{ssatwanti@gmail.com}

\author{
A. Swaminathan
}
\address{
Department of  Mathematics  \\
Indian Institute of Technology, Roorkee-247 667,
Uttarakhand,  India
}
\email{swamifma@iitr.ernet.in, mathswami@gmail.com}

\bigskip

\begin{abstract}
Let $\mathcal{W}_{\beta}^\delta(\alpha,\gamma)$ be the class of
normalized analytic functions $f$ defined in the domain $|z|<1$
satisfying
\begin{align*}
{\rm Re\,} e^{i\phi}\left(\dfrac{}{}(1\!-\!\alpha\!+\!2\gamma)\!\left({f}/{z}\right)^\delta
+\left(\alpha\!-\!3\gamma+\gamma\left[\dfrac{}{}\left(1-{1}/{\delta}\right)\left({zf'}/{f}\right)+
{1}/{\delta}\left(1+{zf''}/{f'}\right)\right]\right)\right.\\
\left.\dfrac{}{}\left({f}/{z}\right)^\delta \!\left({zf'}/{f}\right)-\beta\right)>0,
\end{align*}
with the conditions $\alpha\geq 0$, $\beta<1$, $\gamma\geq 0$,
$\delta>0$ and $\phi\in\mathbb{R}$. Moreover, for
$0<\delta\leq\frac{1}{(1-\zeta)}$, $0\leq\zeta<1$, the class
$\mathcal{C}_\delta(\zeta)$ be the subclass of normalized
analytic functions such that
\begin{align*}
{\rm Re}{\,}\left(1/\delta\left(1+zf''/f'\right)+(1-1/\delta)\left({zf'}/{f}\right)\right)>\zeta,\quad |z|<1.
\end{align*}
In the present work, the sufficient conditions on $\lambda(t)$
are investigated, so that the generalized integral transform
\begin{align*}
V_{\lambda}^\delta(f)(z)= \left(\int_0^1 \lambda(t) \left({f(tz)}/{t}\right)^\delta dt\right)^{1/\delta},\quad |z|<1,
\end{align*}
carries the functions from
$\mathcal{W}_{\beta}^\delta(\alpha,\gamma)$ into
$\mathcal{C}_\delta(\zeta)$. Several interesting applications
are provided for special choices of $\lambda(t)$.
\end{abstract}

\subjclass[2000]{30C45, 30C55, 30C80}

\subjclass[2000]{30C45, 30C55, 30C80}

\keywords{
Duality techniques, Integral transforms, Convex functions,
Starlike functions, Hypergeometric functions, Hadamard Product
}

\maketitle

\pagestyle{myheadings} \markboth{Satwanti Devi and A. Swaminathan }{ Convexity of Generalized
Integral Transform and Duality Technique}

\section{introduction}
Let $\mathcal{A}$ be the class of all normalized analytic functions $f$ defined in the region $\mathbb{D}=\{
z\in\mathbb{C}: |z|<1\}$ with the condition $f(0)=f'(0)-1=0$ and $\mathcal{S}\subset\mathcal{A}$ be the class of all
univalent functions in $\mathbb{D}$.
We are interested in the following problem.

\begin{problem}\label{Prob1:Genl_operator}
Given  $\lambda(t):[0,1]\!\rightarrow\! \mathbb{R}$ be a non-negative
integrable function with the condition $\int_0^1\lambda(t)
dt=1$, then for $f$ in a particular class of analytic functions, the generalized integral
transform defined by
\begin{align}\label{eq-weighted-integralOperator}
V_{\lambda}^\delta(f)(z):=\left(\int_0^1 \lambda(t) \left(\dfrac{f(tz)}{t}\right)^\delta dt\right)^{1/\delta},
\quad \delta>0\quad {\rm and}\quad z\in\mathbb{D}
\end{align}
is in one of the subclasses of ${\mathcal{S}}$.
\end{problem}

This problem, for the case $\delta=1$ was first stated by R. Fournier and S. Ruscheweyh \cite{FourRusExtremal} by examining the
characterization of two extremal problems. They
considered the functions $f$ in the class ${\mathcal{P}}_{\beta}$, where
\begin{align*}
{\mathcal{P}}_{\beta} = \left\{ \dfrac{}{} f\in {\mathcal{A}}: {\rm Re\,} \left(\dfrac{}{}e^{i\alpha}(f'(z)-\beta)\right)>0, \quad
\alpha\in {\mathbb{R}}, \quad z\in{\mathbb{D}}\right\}.
\end{align*}
such that the integral operator $V_{\lambda}(f)(z): V_{\lambda}^1(f)(z)$ is in the class ${\mathcal{S}^{\ast}}$ of functions that map
${\mathbb{D}}$ onto domain that are starlike with respect to origin using duality techniques. Same problem was solved by
R.M. Ali and V.Singh \cite{Ali} for functions $f$ in the class ${\mathcal{P}}_{\beta}$ so that the
integral operator $V_{\lambda}(f)(z)$ is in the class ${\mathcal{C}}$ of functions that map ${\mathbb{D}}$ onto domain that are convex.

The integral operator $V_{\lambda}(f)(z)$ contains some of the
well-known operator such as Bernardi, Komatu and Hohlov as its
special cases for particular choices of $\lambda(t)$, which has
been extensively studied by various authors (for details see
\cite{Ali, saigo, DeviPascu} and references therein).
Generalization of the class ${\mathcal{P}}_{\beta}$ for studying the above problem with reference to the operator $V_{\lambda}(f)(z)$  were
considered by several researchers in the recent past and interesting applications were obtained. For
most general result in this direction, see \cite{DeviPascuOrder} and references therein.

The integral operator \eqref{eq-weighted-integralOperator} and
its generalization was considered in the work of I. E.
Bazilevi\v c \cite{BazilevicIntegOper} (for more details see
\cite{Aghalary, SinghIntOperator}). Problem $\ref{Prob1:Genl_operator}$ for the generalized integral operator
$V_{\lambda}^\delta(f)(z)$ relating starlikeness was investigated by A. Ebadian et al. in \cite{Aghalary} by considering
the class
{\small{
\begin{align*}
P_\alpha(\delta,\beta):=
\left\{f\in\mathcal{A}\,,\,\exists\,\phi\in\mathbb{R}:\,{\rm Re\,} e^{i\phi}
\left((1-\alpha)\left(\dfrac{f}{z}\right)^\delta
+\alpha\left(\dfrac{f}{z}\right)^\delta \left(\dfrac{zf'}{f}\right)-\beta\right)>0,\,z\in\mathbb{D}\right\}
\end{align*}}}
with $\alpha \geq0$, $\beta<1$ and $\delta>0$.
The authors of the present work have generalized the starlikeness criteria \cite{DeviGenlStar}
by considering the following subclass of ${\mathcal{S}}^{\ast}$
\begin{align}\label{eq-gener:starlike-related:classes}
f\in \mathcal{S}^\ast_s(\zeta)\,\,\Longleftrightarrow\,\,
z^{1-\delta}f^{\delta}\in\mathcal{S}^\ast(\xi),
\end{align}
for $\xi=1-\delta+\delta\zeta$ and $0\leq\xi<1$ where
$\mathcal{S}^{\ast}(\xi)$ is the class having the analytic characterization
\begin{align*}
{\rm{Re}}{\,} \left(\dfrac{zf^\prime}{f}\right)>\xi,\quad 0\leq\xi<1, \quad z\in\mathbb{D}.
\end{align*}
Note that $\mathcal{S}^{\ast}: =\mathcal{S}^{\ast}(0)$.
In \cite{DeviGenlStar}, this problem was investigated
by considering the integral operator acting on the most generalized class of ${\mathcal{P}}_{\beta}$, related to the present context,
which is defined as follows.
{\small{
\begin{align*}
\mathcal{W}_{\beta}^\delta(\alpha,\gamma)\!:\!=\!\left\{\!f\!\in\!\mathcal{A}:
{\rm Re\,} e^{i\phi}\left((1\!-\!\alpha\!+\!2\gamma)\!\left(\frac{f}{z}\right)^\delta
\!+\!\left(\alpha\!-\!3\gamma\!+\!\gamma\left[\left(1\!-\!\frac{1}{\delta}\right)\left(\frac{zf'}{f}\right)\!+\!
\frac{1}{\delta}\left(1\!+\!\frac{zf''}{f'}\right)\right]\right)\right.\right.\\
\left.\left.\left(\frac{f}{z}\right)^\delta \!\left(\frac{zf'}{f}\right)-\beta\right)>0,
\,\,z\in\mathbb{D},\,\,\phi\in\mathbb{R}\right\}.
\end{align*}}}

\noindent Here, $\alpha \geq 0$, $\beta<1$, $\gamma\geq 0$ and $\phi\in {\mathbb{R}}$. Note that
$\mathcal{W}_{\beta}^\delta(\alpha,0)\equiv P_\alpha(\delta,\beta)$ is the class considered by A. Ebadian et al in \cite{Aghalary},
$R_\alpha(\delta,\beta):\equiv \mathcal{W}_{\beta}^\delta(\alpha+\delta+\delta\alpha,\delta\alpha)$ is a closely related class and
$\mathcal{W}_{\beta}^1(\alpha,\gamma)\equiv\mathcal{W}_{\beta}(\alpha,\gamma)$ introduced by R.M. Ali et al in \cite{AbeerS}.

As the investigation of this generalization provided fruitful results, we are interested in considering further geometric properties
of the generalized integral operator given by \eqref{eq-weighted-integralOperator} for $f\in \mathcal{W}_{\beta}^\delta(\alpha,\gamma)$.
Motivated, by the well-known Alexander theorem \cite{DU},
\begin{align*}
f\in\mathcal{C}(\xi)\Longleftrightarrow zf '\in\mathcal{S}^\ast(\xi),
\end{align*}
where ${\mathcal{C}}(0)={\mathcal{C}}$,
we consider the subclass
\begin{align}\label{iff:relation:convex+starlike}
f\in \mathcal{C}_\delta(\zeta)\Longleftrightarrow
(z^{2-\delta}f^{\delta-1}f')\in\mathcal{S}^\ast(\xi),
\end{align}
where $\xi:=1-\delta+\delta\zeta$ with the conditions
$1-\frac{1}{\delta}\leq\zeta<1$, $0\leq\xi<1$ and
$\delta\geq1$. In the sequel, the term $\xi$ is used to denote
$(1-\delta+\delta\zeta)$. From the above expression, it is easy
to observe that the class $\mathcal{C}_\delta(\zeta)$ and
$\mathcal{C}(\xi)$ are equal, when $\delta=1$.

The class $\mathcal{C}_\delta(\zeta)$ given in \eqref{iff:relation:convex+starlike}
is related to the class of $\alpha$- convex of order $\zeta$ $(0\leq\zeta<1)$ that were
introduced in the work of P. T. Mocanu \cite{MocanuAlphaConvex} and
defined analytically as
\begin{align*}
{\rm Re}{\,}\left(\left(1-\alpha\right)\left(\dfrac{zf'(z)}{f(z)}\right)+\alpha\left(1+\dfrac{zf''(z)}{f'(z)}\right)\right)>\zeta,
\quad (1-\zeta)\leq\alpha<\infty.
\end{align*}
Clearly the class $\mathcal{C}_\delta(\zeta)$ is nothing but the
subclass of $\mathcal{S}$ consisting of ${1}/{\delta}\,$-
convex functions of order $\zeta$.

Having provided all the required information from the literature, in what follows, we obtain sharp estimates
for the parameter $\beta$ so that the generalized
integral operator \eqref{eq-weighted-integralOperator}
maps the function from $\mathcal{W}_\beta^\delta(\alpha,\gamma)$ into
$\mathcal{C}_\delta(\zeta)$, where
$0<\delta\leq\frac{1}{(1-\zeta)}$ and $0\leq\zeta<1$.
Duality techniques, given in \cite{Rus} provide the platform for the entire study of this manuscript. One of the particular tool in this regard
is the convolution or Hadamard product of two
functions
\begin{align*}
f_1(z)=\sum_{n=0}^\infty a_n z^n\quad {\rm and}\quad f_2(z)=\sum_{n=0}^\infty b_n z^n,\quad z\in\mathbb{D},
\end{align*}
given by
\begin{align*}
(f_1\ast f_2)(z)\!=\!\displaystyle\sum_{n=0}^{\infty}a_nb_n
z^n.
\end{align*}

Furthermore, consider the complex parameters $c_i$ $(i=0,1,
\ldots,p)$ and $d_j$ $(j=0,1,\ldots,q)$ with
$d_j\neq0,-1,\ldots$ and $p\leq q+1$. Then, in the region
$\mathbb{D}$, the generalized hypergeometric function is given
by
\begin{align*}
{\,}_{p}F_q\left(\!\!\!\!\!
\begin{array}{cll}&\displaystyle c_1,\ldots,c_p
\\
&\displaystyle d_1,\ldots,d_q
\end{array};z\right)
=\sum_{n=0}^\infty
\dfrac{(c_1)_n\ldots,(c_p)_n}{(d_1)_n\ldots,(d_q)_nn!}z^n,\quad z\in\mathbb{D},
\end{align*}
that can also be represented as
$_{p}F_{q}(c_1,\ldots,c_p;d_1,\ldots,d_q;z)$ or $_{p}F_{q}$. In
particular, $_{2}F_{1}$ is the well-known Gaussian
hypergeometric function. For any natural number $n$, the
Pochhammer symbol or shifted factorial $(\varepsilon)_n$ is
defined as $(\varepsilon)_0=1$ and
$(\varepsilon)_n=\varepsilon(\varepsilon+1)_{n-1}$.

The paper is organized as follows: Necessary and sufficient conditions are
obtained in Section \ref{Sec-Gener:Convex-Main:Result} that
ensures
$V_\lambda^\delta(\mathcal{W}_\beta^\delta(\alpha,\gamma))\!\subset\!
\mathcal{C}_\delta(\zeta)$. The simpler sufficient criterion
are derived in Section \ref{Sec-Gener:Convex-Suff:Cond}, which
are further implemented to find many interesting applications  involving various integral operators
for special choices of $\lambda(t)$.
\section{preliminaries}
The parameters $\mu,\,\nu\geq0$ introduced in \cite{AbeerS} are
used for further analysis that are defined by the relations

\begin{align}\label{eq-mu+nu}
\mu\nu=\gamma\quad  \text{and}\quad \mu+\nu=\alpha-\gamma.
\end{align}
Clearly $\eqref{eq-mu+nu}$ leads to two cases.
\begin{itemize}
\item[{\rm{(i)}}] $\gamma=0 \, \Longrightarrow \mu=0,\,
    \nu=\alpha \geq 0$.
\item[{\rm{(ii)}}] $\gamma>0 \, \Longrightarrow \mu>0,\,
    \nu>0$.
\end{itemize}
Define the auxiliary function

\begin{align}\label{eq-Gener:convex-psi:munu}
\psi_{\mu,\nu}^\delta(z):=\sum_{n=0}^\infty\dfrac{\delta^2}{(\delta+n\mu)(\delta+n\nu)}z^n
=\int_0^1\int_0^1\dfrac{1}{(1-u^{\nu/\delta/}v^{\mu/\delta}z)}dudv,
\end{align}
which by a simple computation gives

\begin{align}\label{eq-Gener:convex-Phi:munu}
\Phi_{\mu,\nu}^\delta(z):=\left(z\psi_{\mu,\nu}^\delta(z)\right)'=\sum_{n=0}^\infty\dfrac{(n+1)\delta^2}{(\delta+n\nu)(\delta+n\mu)}z^n
\end{align}
\begin{align}\label{eq-Gener:convex-Upsilon:munu}
{\rm and}\quad\quad\Upsilon_{\mu,\nu}^\delta(z):=\left(z\left(z\psi_{\mu,\nu}^\delta(z)\right)'\right)'=
\sum_{n=0}^\infty\dfrac{(n+1)^2\delta^2}{(\delta+n\nu)(\delta+n\mu)}z^n.
\end{align}
Taking $\gamma=0$ $(\mu=0,\,\nu=\alpha\geq0)$, let
$q_{0,\alpha}^\delta(t)$ be the solution of the differential
equation

\begin{align}\label{eq-generalized:convex-q:gamma0}
\dfrac{d}{dt}\left(t^{\delta/\alpha}q_{0,\alpha}^\delta(t)\right)=\dfrac{\delta t^{\delta/\alpha-1}}{\alpha}
\left(\left(1-\dfrac{1}{\delta}\right)\dfrac{(1-\xi\left(1+t\right))}{(1-\xi)\left(1+t\right)^2}
+\left(\dfrac{1}{\delta}\right)\dfrac{(1-t-\xi\left(1+t\right))}{(1-\xi)\left(1+t\right)^3}\right),
\end{align}
with the initial condition $q_\alpha^\delta(0)=1$. Then the
solution of \eqref{eq-generalized:convex-q:gamma0} is given by

\begin{align*}
q_{0,\alpha}^\delta(t)
=\dfrac{\delta t^{-\delta/\alpha}}{\alpha}
\int_0^t\left(\left(1-\dfrac{1}{\delta}\right)\dfrac{(1-\xi\left(1+s\right))}{(1-\xi)\left(1+s\right)^2}
+\left(\dfrac{1}{\delta}\right)\dfrac{(1-s-\xi\left(1+s\right))}{(1-\xi)\left(1+s\right)^3}\right)s^{\delta/\alpha-1}ds.
\end{align*}
Also, for the case $\gamma>0$ $(\mu>0,\,\nu>0)$, let
$q_{\mu,\nu}^\delta(t)$ be the solution of the differential
equation

{\small{
\begin{align}\label{eq-Gener:Convex-q:gamma>0}
\dfrac{d}{dt}\left(t^{\delta/\nu}q_{\mu,\nu}^\delta(t)\right)=\dfrac{\delta^2 t^{\delta/\nu-1}}{\mu\nu}
\int_0^1\!\!\left(\!\left(1\!-\!\dfrac{1}{\delta}\right)\dfrac{(1\!-\!\xi(1\!+\!st))}{(1\!-\!\xi)(1\!+\!st)^2}\!+\!
\left(\dfrac{1}{\delta}\right)\dfrac{(1\!-\!st\!-\!\xi(1\!+\!st))}{(1\!-\!\xi)(1\!+\!st)^3}\right)s^{\delta/\mu-1}ds,
\end{align}}}
with the initial condition $q_{\mu,\nu}^\delta(0)=1$. Then the
solution of \eqref{eq-Gener:Convex-q:gamma>0} is given as

{\small{
\begin{align}\label{eq-Gener:Convex-Integral:q:gamma>0}
q_{\mu,\nu}^\delta(t)\!=\!\dfrac{\delta^2}{\mu\nu}\!\int_0^1\!\!\!\!\int_0^1\!\left(\!\left(1\!-\!\dfrac{1}{\delta}\right)
\dfrac{(1\!-\!\xi\left(1\!+\!trs\right))}{(1\!-\!\xi)\left(1\!+\!trs\right)^2}\!+\!
\left(\dfrac{1}{\delta}\right)\dfrac{(1\!-\!trs\!-\!\xi\left(1\!+\!trs\right))}
{(1\!-\!\xi)\left(1+trs\right)^3}\right)r^{\delta/\nu-1}s^{\delta/\mu-1}drds,
\end{align}}}

Furthermore, for given $\lambda(t)$ and $\delta>0$, we
introduce the functions

\begin{align}\label{eq-weighted:Lambda}
\Lambda_\nu^\delta(t):=\displaystyle\int_t^1\dfrac{\lambda(s)}{s^{{\delta}/{\nu}}}ds,\quad \nu>0,
\end{align}
and

\begin{align}\label{eq-weighted:Pi}
\Pi_{\mu,\nu}^\delta(t):=
\left\{\!
\begin{array}{cll}\displaystyle \int_t^1\!\!\dfrac{\Lambda_\nu^\delta(s)}{s^{{\delta}/{\mu}-{\delta}/{\nu}+1}}ds
&\quad\gamma\!>\!0 \,(\mu\!>\!0,\nu\!>\!0),\\\\
\Lambda_\alpha^\delta(t)
&\quad\gamma\!=\!0\,(\mu\!=\!0,\nu\!=\!\alpha\!\geq\!0)
\end{array}\right.
\end{align}
which are positive on $t\in(0,1)$ and integrable on
$t\in[0,1]$.

For $\delta=1$, these information coincide with the one given
in \cite{MahnazC}. \eqref{eq-weighted:Lambda} and \eqref{eq-weighted:Pi} are also considered in \cite{DeviGenlStar} and \cite{DeviGenlUniv}.
In \cite{DeviGenlStar}, the investigations are related to
$V_\lambda^\delta(f)(z)\in\mathcal{S}^\ast_s(\zeta)$, whenever $f\in\mathcal{W}_\beta^\delta(\alpha,\gamma)$, whereas
various other inclusion properties, in particular,
$V_\lambda^\delta(f)(z)\in\mathcal{W}_{\beta_1}^{\delta_1}(\alpha_1,\gamma_1)$, whenever $f\in\mathcal{W}_{\beta_2}^{\delta_2}(\alpha_2,\gamma_2)$
are investigated in \cite{DeviGenlUniv}.
\section{main results}\label{Sec-Gener:Convex-Main:Result}
The following result establishes both the necessary and
sufficient conditions that ensure
$F_\delta(z):=V_{\lambda}^\delta(f)(z)\in\mathcal{C}_\delta(\zeta)$,
whenever $f\in\mathcal{W}_{\beta}^\delta(\alpha,\gamma)$.
\begin{theorem}\label{Thm-Gener:Convex-Mainresult}
Let $\mu\!\geq\!0$, $\nu\!\geq\!0$ are given by the relation in
\eqref{eq-mu+nu} and
$\left(1-\frac{1}{\delta}\right)\leq\zeta\leq
\left(1-\frac{1}{2\delta}\right)$ where $\delta\geq1$. Let
$\beta\!<\!1$ satisfy the condition

\begin{align}\label{Beta-Cond-Generalized:Convex}
\dfrac{\beta-\frac{1}{2}}{1-\beta}=-\int_0^1 \lambda(t) q_{\mu,\nu}^\delta(t) dt,
\end{align}
where $q_{\mu,\nu}^\delta(t)$ is defined by the differential
equation \eqref{eq-generalized:convex-q:gamma0} for $\gamma=0$
and \eqref{eq-Gener:Convex-q:gamma>0} for $\gamma>0$. Further
assume that the functions given in \eqref{eq-weighted:Lambda}
and \eqref{eq-weighted:Pi} attains

\begin{align*}
\lim_{t\rightarrow 0^+} t^{{\delta}/{\nu}}\Lambda_\nu^\delta(t)\rightarrow 0\quad{\rm and}\quad
\lim_{t\rightarrow 0^+}t^{{\delta}/{\mu}}\Pi_{\mu,\nu}^\delta(t)\rightarrow 0.
\end{align*}
Then for $f(z)\in\mathcal{W}_{\beta}^\delta(\alpha,\gamma)$,
the function $F_\delta:=V_\lambda^\delta(f(z))\!\in\!
\mathcal{C}_\delta(\zeta)$ iff,
$\mathcal{M}_{\Pi_{\mu,\nu}^\delta}(h_\xi)(z)\geq0$, where

\begin{align*}
\mathcal{M}_{\Pi_{\mu,\nu}^\delta}(h_\xi)(z)\!:=\!\left\{\!\!
\begin{array}{cll}\displaystyle \int_0^1 t^{{\delta}/{\mu}-1}\Pi_{\mu,\nu}^\delta(t){\,} h_{\xi,\delta,z}(t)dt,\quad
&\gamma>0{\,}(\mu>0,{\,}\nu>0),\\\\
\displaystyle\int_0^1 t^{{\delta}/{\alpha}-1}\Lambda_\alpha^\delta(t){\,} h_{\xi,\delta,z}(t)dt,\quad
&\gamma=0{\,}(\mu=0,{\,}\nu=\alpha\geq0),
\end{array}\right.
\end{align*}
and

\begin{align*}
h_{\xi,\delta,z}(t):=\left(1\!-\!\dfrac{1}{\delta}\right)\!
\!\left({\rm Re}{\,}\dfrac{h_\xi(tz)}{tz}\!-\!\dfrac{1\!-\!\xi(1\!+\!t)}{(1\!-\!\xi)(1\!+\!t)^2}\right)\!+\!\left(\dfrac{1}{\delta}\right)\!
\left({\rm Re}{\,}h_\xi'(tz)\!-\!\dfrac{1\!-\!t\!-\!\xi(1\!+\!t)}{(1\!-\!\xi)(1\!+\!t)^3}\right)
\end{align*}
for the function

\begin{align}\label{eq-h(z)-extremal-starlike}
h_\xi(z):=z\left(\dfrac{1+\frac{\epsilon+2\xi-1}{2(1-\xi)}z}{(1-z)^2}\right),\quad |\epsilon|=1
\end{align}
and $\xi:=1-\delta(1-\zeta)$, $0\leq\xi\leq1/2$. The value of
$\beta$ is sharp.
\end{theorem}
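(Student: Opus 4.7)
The plan is to reduce the problem to a starlikeness condition via the Alexander-type equivalence \eqref{iff:relation:convex+starlike}, and then apply the Ruscheweyh duality machinery in the form exploited in the cited works of Fournier--Ruscheweyh, Ali--Singh and the authors. Set $G_\delta(z):=z^{2-\delta}F_\delta^{\delta-1}(z)F_\delta'(z)$; by \eqref{iff:relation:convex+starlike} the assertion $F_\delta\in\mathcal{C}_\delta(\zeta)$ is equivalent to $G_\delta\in\mathcal{S}^\ast(\xi)$ with $\xi=1-\delta+\delta\zeta$. Ruscheweyh's dual characterization of $\mathcal{S}^\ast(\xi)$ recasts this as a non-vanishing/positivity condition for the convolution $(G_\delta\ast h_\xi)(z)/z$, where $h_\xi$ runs over the family \eqref{eq-h(z)-extremal-starlike} parametrized by $|\epsilon|=1$. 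The two bracketed terms inside $h_{\xi,\delta,z}(t)$ correspond exactly to the decomposition $(1-1/\delta)(zf'/f)+(1/\delta)(1+zf''/f')$ appearing in the analytic definition of the $1/\delta$-convex class.

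Next I would rewrite membership in $\mathcal{W}_{\beta}^\delta(\alpha,\gamma)$ as an integral/convolution representation of $(f/z)^\delta$. Using the factorization $\mu\nu=\gamma$, $\mu+\nu=\alpha-\gamma$ from \eqref{eq-mu+nu}, the second-order differential operator sitting inside ${\rm Re\,}e^{i\phi}(\cdots-\beta)>0$ factors so that its inverse on power series coincides with the auxiliary function $\psi_{\mu,\nu}^\delta(z)$ of \eqref{eq-Gener:convex-psi:munu}. Combined with the Herglotz representation for analytic functions with ${\rm Re\,}e^{i\phi}(p-\beta)>0$, this yields $(f/z)^\delta=\psi_{\mu,\nu}^\delta\ast H$ where $H$ is a mean of Cauchy kernels $(1+xz)/(1-xz)$, $|x|=1$. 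Applying $V_\lambda^\delta$ and invoking Fubini, the weight $\lambda$ combines with the double integral kernel of $\psi_{\mu,\nu}^\delta$ to produce precisely $\Lambda_\nu^\delta$ and $\Pi_{\mu,\nu}^\delta$ from \eqref{eq-weighted:Lambda}--\eqref{eq-weighted:Pi}; the decay hypotheses $t^{\delta/\nu}\Lambda_\nu^\delta(t)\to 0$ and $t^{\delta/\mu}\Pi_{\mu,\nu}^\delta(t)\to 0$ are what make the requisite integration by parts legitimate and collapse the double kernel to a single kernel weighted by $\Pi_{\mu,\nu}^\delta$.

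Forming $G_\delta=z^{2-\delta}F_\delta^{\delta-1}F_\delta'$ from this representation and differentiating the resulting $\Phi_{\mu,\nu}^\delta$ and $\Upsilon_{\mu,\nu}^\delta$ building blocks given in \eqref{eq-Gener:convex-Phi:munu}--\eqref{eq-Gener:convex-Upsilon:munu} produces the convolution kernel whose real part, by the very definitions \eqref{eq-generalized:convex-q:gamma0} and \eqref{eq-Gener:Convex-q:gamma>0} of $q_{\mu,\nu}^\delta$, equals the sum of the two bracketed terms inside $h_{\xi,\delta,z}(t)$. The normalization \eqref{Beta-Cond-Generalized:Convex} on $\beta$ is the precise value that makes the constant arising from the Herglotz mass cancel against the $h_\xi$-contribution, so that the positivity condition from Ruscheweyh duality collapses to the single inequality $\mathcal{M}_{\Pi_{\mu,\nu}^\delta}(h_\xi)(z)\geq 0$.

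The principal obstacle I expect is the bookkeeping of the double convolution and its reduction to the single kernel $\Pi_{\mu,\nu}^\delta$, especially treating the degenerate case $\gamma=0$ (where $\mu=0$ collapses one integral) in parallel with $\gamma>0$, and justifying the interchange of $V_\lambda^\delta$ with the Herglotz integral. Once the characterization is established, sharpness of $\beta$ is obtained by taking $f$ for which the Herglotz measure degenerates to a unit point mass at $\epsilon\in\partial\mathbb{D}$; this choice saturates \eqref{Beta-Cond-Generalized:Convex} as an equality and forces the inequality $\mathcal{M}_{\Pi_{\mu,\nu}^\delta}(h_\xi)(z)\geq 0$ to be attained by the corresponding extremal $h_\xi$, so no smaller $\beta$ can work.
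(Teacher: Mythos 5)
Your outline does follow the paper's own strategy: the reduction via \eqref{iff:relation:convex+starlike} to starlikeness of $z^{2-\delta}F_\delta^{\delta-1}F_\delta'$, the duality reduction of $\mathcal{W}_{\beta}^\delta(\alpha,\gamma)$ to test functions (your tilted--Herglotz description, after passing to extreme points over the tilt $\phi$, is exactly the paper's two-parameter family $(1+xz)/(1+yz)$), the convolution criterion \eqref{eq-gener:Convex-equiv1} with $h_\xi$, the conversion of non-vanishing into a ${\rm Re}>\tfrac12$ inequality by the lemma in \cite[p.~23]{Rus}, the substitution of \eqref{Beta-Cond-Generalized:Convex}, and the change-of-variables/integration-by-parts reduction to the kernel $\Pi_{\mu,\nu}^\delta$, where the limit hypotheses kill the boundary terms. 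However, two components are genuinely missing or wrong.

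First, the univalence component. The paper does not pass directly from the convolution criterion to the conclusion: it first proves that $z^{2-\delta}F_\delta^{\delta-1}F_\delta'$ is univalent, by showing ${\rm Re}\,\left(z^{2-\delta}F_\delta^{\delta-1}F_\delta'\right)'>0$ --- using the duality lemma, the choice \eqref{Beta-Cond-Generalized:Convex} of $\beta$, the integral representations of $\Phi_{\mu,\nu}^\delta$, $\Upsilon_{\mu,\nu}^\delta$, $q_{\mu,\nu}^\delta$, and the elementary bound ${\rm Re}\,(1-rstz)^{-2}\geq(1+rst)^{-2}$ --- and then invoking the Noshiro--Warschawski theorem. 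Your sketch has no counterpart for this; the paper states explicitly that both univalence and the order-of-starlikeness condition must be established. One could try to argue univalence is subsumed, on the grounds that \eqref{eq-gener:Convex-equiv1} characterizes $\mathcal{S}^\ast(\xi)$ within all of $\mathcal{A}$ and ${\rm Re}(zg'/g)>\xi\geq 0$ already forces univalence, but you neither raise nor justify this point, and the paper does not take it for granted.

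Second, the sharpness argument fails as written. Equation \eqref{Beta-Cond-Generalized:Convex} is an equality \emph{by definition}: it is how $\beta$ is chosen, independently of any particular $f$, so no extremal choice of $f$ can ``saturate'' it, and the assertion that this ``forces $\mathcal{M}_{\Pi_{\mu,\nu}^\delta}(h_\xi)\geq 0$ to be attained'' does not yield the claim that $\beta$ is optimal. The paper's sharpness proof is a concrete computation: take $f$ solving \eqref{eq-generalized-f/z-extremal} (the test function with $x=1$, $y=-1$), expand $(f/z)^\delta$ and hence $G_\delta(z):=z\left(F_\delta/z\right)^\delta\left(zF_\delta'/F_\delta\right)$ in power series with moments $\tau_n=\int_0^1 t^n\lambda(t)\,dt$, and combine this with the series \eqref{eq-Gener:Convex-q:series} for $q_{\mu,\nu}^\delta$ and with \eqref{Beta-Cond-Generalized:Convex} to verify the identity $zG_\delta'(z)=\xi\,G_\delta(z)$ at $z=-1$; that is, the order of starlikeness $\xi$ is exactly attained on the boundary, so the value of $\beta$ cannot be improved. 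Without some computation of this kind your final paragraph is an assertion, not a proof.
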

\begin{proof}
From \eqref{iff:relation:convex+starlike}, it is clear that

\begin{align}\label{iff:relation:convex+starlike-integral}
F_\delta\in\mathcal{C}_\delta(\zeta)\Longleftrightarrow\left(z^{2-\delta}(F_\delta)^{\delta-1}F_\delta'\right)\in\mathcal{S}^\ast(\xi)
\end{align}
where $\xi:=1-\delta(1-\zeta)$. Thus, to prove
$\mathcal{M}_{\Pi_{\mu,\nu}^\delta}(h_\xi)\geq 0$ using the
given hypothesis, it is required to show that the function
$z^{2-\delta}(F_\delta)^{\delta-1}F_\delta'$ is univalent and
satisfy the order of starlikeness condition, and conversely.
Let

{\small{
\begin{align}\label{eq-generalized:starlike-H}
H(z):=(1\!-\!\alpha\!+\!2\gamma)\!\left(\frac{f}{z}\right)^\delta
\!+\!\left(\alpha\!-\!3\gamma+\gamma\left[\left(1\!-\!\frac{1}{\delta}\right)\left(\frac{zf'}{f}\right)\!+\!
\frac{1}{\delta}\left(1\!+\!\frac{zf''}{f'}\right)\right]\right)
\left(\frac{f}{z}\right)^\delta\!\left(\frac{zf'}{f}\right).
\end{align}}}
Using the relation \eqref{eq-mu+nu} in
\eqref{eq-generalized:starlike-H} gives

\begin{align*}
H(z)
=\dfrac{\mu\nu}{\delta^2}z^{1-\delta/\mu}\left(z^{\delta/\mu-\delta/\nu+1} \left(
z^{\delta/\nu}\left(\dfrac{f}{z}\right)^\delta\right)'\right)'.
\end{align*}
Further, set $G(z)=(H(z)-\beta)/(1-\beta)$, then there exist
some $\phi\in\mathbb{R}$, such that ${\rm
Re}\left(e^{i\phi}{\,}G(z)\right)>0$. Hence by the duality
principle \cite[p. 22]{Rus}, we may confine to the function
$f(z)$ for which $G(z)={(1+xz)}/{(1+yz)}$, where $|x|=|y|=1$,
which directly implies

\begin{align*}
\dfrac{\mu\nu}{\delta^2}{\,} z^{1-\delta/\mu}\left(z^{\delta/\mu-\delta/\nu+1}
\left( z^{\delta/\nu}\left(\dfrac{f}{z}\right)^\delta\right)'\right)'
=(1-\beta)\dfrac{1+xz}{1+yz}+\beta,
\end{align*}
or equivalently,

\begin{align}\label{eq-generalized:starlike-f/z:series}
\left(\frac{f(z)}{z}\right)^\delta
&=\dfrac{\delta^2}{\mu\nu z^{{\delta}/{\nu}}}\left(\int_0^z\dfrac{1}{\eta^{{\delta}/{\mu}-{\delta}/{\nu}+1}}
\left(\int_0^\eta\dfrac{1}{\omega^{1-{\delta}/{\mu}}}\left((1-\beta)\dfrac{1+x\omega}{1+y\omega}+\beta\right)
d\omega\right)d\eta\right)\\
&=\beta+(1\!-\!\beta)\left(\left(\dfrac{1\!+\!xz}{1\!+\!yz}\right)\ast\sum_{n=0}^\infty\dfrac{\delta^2z^n}{(\delta+n\nu)(\delta+n\mu)}\right).
\end{align}
If $A(z)$ is taken as $\displaystyle\left(\dfrac{f(z)}{z}\right)^{\delta}$, then using
\eqref{eq-Gener:convex-psi:munu}, \eqref{eq-Gener:convex-Phi:munu} and \eqref{eq-Gener:convex-Upsilon:munu},
in $A(z)$, $zA'(z)$ and $z(zA'(z))'$ respectively, gives
%
\begin{align}\label{eq-Gener:Convex-zf/z-Double:Deriv:Convol}
\left(z\left(z\left(\frac{f(z)}{z}\right)^\delta\right)^\prime\right)^\prime
=\left(\beta+(1-\beta)
\left(\dfrac{1+xz}{1+yz}\right)\right)\ast\Upsilon_{\mu,\nu}^\delta(z).
\end{align}
Since
\begin{align}\label{eq-Gener:Convex-Simplified-zf/z:Deriv}
\left(z\left(\dfrac{f(z)}{z}\right)^\delta\right)'=
(1-\delta)\left(\dfrac{f(z)}{z}\right)^\delta+\delta\left(\dfrac{f(z)}{z}\right)^\delta\left(\dfrac{zf'(z)}{f(z)}\right),
\end{align}
this gives
\begin{align}\label{eq-Gener:Convex-Simpli-zf/z:Double:Deriv}
\left(z\left(\dfrac{f(z)}{z}\right)^\delta\left(\dfrac{zf'(z)}{f(z)}\right)\right)^\prime&=
\left(1-\dfrac{1}{\delta}\right)\left(z\left(\dfrac{f(z)}{z}\right)^\delta\right)'+
\dfrac{1}{\delta}\left(z\left(z\left(\dfrac{f(z)}{z}\right)^\delta\right)^\prime\right)^\prime,
\end{align}
taking the logarithmic derivative on both sides of the integral operator
\eqref{eq-weighted-integralOperator} and differentiating further with a simple computation involving
%
%
%
%
\eqref{eq-Gener:Convex-zf/z-Double:Deriv:Convol}
gives
\newline
$\displaystyle
\left(\!z\!\left(\!\dfrac{z(F_\delta)'}{F_\delta}\!\right)\!\!\left(\dfrac{F_\delta}{z}\right)^\delta\right)^\prime
$
\begin{align*}
\!=\!(1\!-\!\beta)\!\left(\!\int_0^1\!\!\!\lambda(t)\!\left(\!\left(\!1\!-\!\dfrac{1}{\delta}\!\right)\!\Phi_{\mu,\nu}^\delta(tz)
\!+\!\left(\!\dfrac{1}{\delta}\!\right)\!\Upsilon_{\mu,\nu}^\delta(tz)\!\right)\!dt\!+\!\dfrac{\beta}{(1\!-\!\beta)}\right)
\ast\left(\dfrac{1+xz}{1+yz}\right).
\end{align*}
The Noshiro-Warschawski's Theorem (for details see\cite[Theorem
2.16]{DU}) states that the function
$z^{2\!-\!\delta}\!(F_\delta)^{\delta\!-\!1}\!(F_\delta)'$
defined in the region $\mathbb{D}$ is univalent if
$\left(z^{2-\delta}(F_\delta)^{\delta-1}(F_\delta)'\right)'$ is
contained in the half plane not containing the origin. Hence,
from the result based on duality principle \cite[Pg. 23]{Rus},
it follows that

$0\neq\left(z\left(\dfrac{z(F_\delta)'}{F_\delta}\right)\left(\dfrac{F_\delta}{z}\right)^\delta\right)^\prime$
\newline
is true if, and only if

\begin{align*}
{\rm Re}{\,}(1-\beta)\left(\int_0^1\lambda(t)\left(\left(1-\dfrac{1}{\delta}\right)\Phi_{\mu,\nu}^\delta(tz)
+\left(\dfrac{1}{\delta}\right)\Upsilon_{\mu,\nu}^\delta(tz)\right)dt+\dfrac{\beta}{(1-\beta)}\right)>\dfrac{1}{2}
\end{align*}
or equivalently,

\begin{align*}
{\rm Re}{\,}(1-\beta)\left(\int_0^1\lambda(t)\left(\left(1-\dfrac{1}{\delta}\right)\Phi_{\mu,\nu}^\delta(tz)
+\left(\dfrac{1}{\delta}\right)\Upsilon_{\mu,\nu}^\delta(tz)\right)dt+\dfrac{\beta-\frac{1}{2}}{(1-\beta)}\right)>0.
\end{align*}
Now, substituting \eqref{Beta-Cond-Generalized:Convex} in the
above inequality implies

\begin{align}\label{eq-Gener:Convex-Phi+Upsilon-q}
{\rm Re}{\,}\int_0^1\lambda(t)\left(\left(1-\dfrac{1}{\delta}\right)\Phi_{\mu,\nu}^\delta(tz)
+\left(\dfrac{1}{\delta}\right)\Upsilon_{\mu,\nu}^\delta(tz)- q_{\mu,\nu}^\delta(t)\right)dt>0.
\end{align}
From equation \eqref{eq-Gener:convex-Phi:munu} and
\eqref{eq-Gener:convex-Upsilon:munu}, it is easy to see that

\begin{align*}
\left(1-\dfrac{1}{\delta}\right)\Phi_{\mu,\nu}^\delta(tz)
+\left(\dfrac{1}{\delta}\right)\Upsilon_{\mu,\nu}^\delta(tz)
=\sum_{n=0}^\infty\dfrac{\delta (n+1)(n+\delta)(tz)^n}{(\delta+n\nu)(\delta+n\mu)},
\end{align*}
whose integral representation is given as

{\small{
\begin{align}\label{eq-generalized-Phi-Upsilon_munu-series}
\left(1\!-\!\dfrac{1}{\delta}\right)\Phi_{\mu,\nu}^\delta(tz)
\!+\!\left(\dfrac{1}{\delta}\right)\Upsilon_{\mu,\nu}^\delta(tz)=\dfrac{\delta^2}{\mu\nu}\!\int_0^1\!\!\!\int_0^1\!
\left(
\dfrac{\left(1\!-\!\frac{1}{\delta}\right)}{\left(1\!-\!trsz\right)^2}+\dfrac{\frac{1}{\delta}(1\!+\!trsz)}
{\left(1-trsz\right)^3}\right)r^{\delta/\nu-1}s^{\delta/\mu-1}drds.
\end{align}}}
Thus, using \eqref{eq-Gener:Convex-Integral:q:gamma>0} and
\eqref{eq-generalized-Phi-Upsilon_munu-series} in
\eqref{eq-Gener:Convex-Phi+Upsilon-q} and on further using the
fact that ${\rm Re\,} \left(\frac{1}{1-rstz}\right)^2\geq
\frac{1}{(1+rst)^2}$ for $z\in\mathbb{D}$, directly implies

{\small{
\begin{align*}
{\rm Re}{\,}\int_0^1&\lambda(t)\left(\!\int_0^1\!\!\!\int_0^1\!\left(\left(1\!-\!\dfrac{1}{\delta}\right)
\dfrac{1}{\left(1\!-\!trsz\right)^2}+\left(\dfrac{1}{\delta}\right)\dfrac{1\!+\!trsz}
{\left(1-trsz\right)^3}\right)r^{\delta/\nu-1}s^{\delta/\mu-1}drds\right.\\
&\left.-\!\int_0^1\!\!\!\int_0^1\!\left(\!\left(\!1\!-\!\dfrac{1}{\delta}\!\right)
\dfrac{1\!-\!\xi\left(1\!+\!trs\right)}{(1\!-\!\xi)\left(1\!
    +\!trs\right)^2}+\left(\dfrac{1}{\delta}\right)\dfrac{1\!-\!trs\!-\!\xi\left(1\!+\!trs\right)}
{(1\!-\!\xi)\left(1+trs\right)^3}\!\right)r^{\delta/\nu-1}s^{\delta/\mu-1}\!drds\!\right)\!dt
\end{align*}}}
{\small{
\begin{align*}
\geq\int_0^1&\lambda(t)\left(\!\int_0^1\!\!\!\int_0^1\!\left(\left(1\!-\!\dfrac{1}{\delta}\right)
\dfrac{1}{\left(1\!+\!trs\right)^2}+\left(\dfrac{1}{\delta}\right)\dfrac{1\!-\!trs}
{\left(1+trs\right)^3}\right)r^{\delta/\nu-1}s^{\delta/\mu-1}drds\right.\\
&\left.-\!\int_0^1\!\!\!\int_0^1\!\left(\!\left(\!1\!-\!\dfrac{1}{\delta}\!\right)
\dfrac{1\!-\!\xi\left(1\!+\!trs\right)}{(1\!-\!\xi)\left(1\!+\!trs\right)^2}
    +\left(\dfrac{1}{\delta}\right)\dfrac{1\!-\!trs\!-\!\xi\left(1\!+\!trs\right)}
{(1\!-\!\xi)\left(1+trs\right)^3}\!\right)r^{\delta/\nu-1}s^{\delta/\mu-1}\!drds\!\right)\!dt
\end{align*}}}
{\small{
\begin{align*}
=\int_0^1\!\!\!\lambda(t)\left(\!\int_0^1\!\!\!\int_0^1\!\left(\left(1\!-\!\dfrac{1}{\delta}\right)
\dfrac{\xi trs}{(1\!-\!\xi)\left(1\!+\!trs\right)^2}\!+\!\left(\dfrac{1}{\delta}\right)\dfrac{2\xi
trs}{(1\!-\!\xi)\left(1+trs\right)^3}\right)r^{\delta/\nu-1}s^{\delta/\mu-1}drds\right)\!dt
\end{align*}}}
{\small{
\begin{align*}
=\int_0^1\!\!\!\lambda(t)\left(\!\int_0^1\!\!\!\int_0^1\!\left(1+trs+\dfrac{1}{\delta}(1-trs)\right)\dfrac{\xi trs}
{(1\!-\!\xi)\left(1+trs\right)^3}r^{\delta/\nu-1}s^{\delta/\mu-1}drds\right)\!dt>0.
\end{align*}}}
Thus, ${\rm
Re}\left(z^{2-\delta}(F_\delta)^{\delta-1}(F_\delta)'\right)'>0$,
means that the function $z^{2-\delta}
(F_\delta)^{\delta-1}(F_\delta)'$ is univalent in $\mathbb{D}$.

In the next part of the theorem  the following two cases are
discussed to show the order of starlikeness condition for the
function $z^{2-\delta}(F_\delta)^{\delta-1}(F_\delta)'$.

\textbf{Case (i).} Let $\gamma=0 \,(\mu=0, \nu=\alpha\geq0)$.
The function $H(z)$ defined in
\eqref{eq-generalized:starlike-H} decreases to
\begin{align*}
H(z)
=\dfrac{\alpha}{\delta} z^{1-\delta/\alpha}\left(z^{\delta/\alpha}\left(\dfrac{f}{z}\right)^\delta\right)'.
\end{align*}
Thus using duality principle, it is easy to see that

\begin{align}\label{eq-Gener:convex-f/z-gamma0}
\left(\dfrac{f}{z}\right)^\delta=\beta+\dfrac{\delta(1-\beta)}{\alpha z^{\delta/\alpha}}
\int_0^z \omega^{\delta/\alpha-1}\left(\dfrac{1+x\omega}{1+y\omega}\right)d\omega,
\end{align}
where $|x|=|y|=1$ and $z\in\mathbb{D}$. A famous result from
the theory of convolution \cite[P. 94]{Rus} states that, if

\begin{align}\label{eq-gener:Convex-equiv1}
g\in\mathcal{S}^\ast(\xi)\Longleftrightarrow
\dfrac{1}{z}(g\ast h_\xi)(z)\neq0,
\end{align}
where $h_\xi(z)$ is defined in
\eqref{eq-h(z)-extremal-starlike}.

For the function
$f(z)\!\in\!\mathcal{W}_\beta^\delta(\alpha,0)$, the generalized
integral operator $F_\delta$ defined in
\eqref{eq-weighted-integralOperator}, belongs to the class
$\mathcal{C}_\delta(\zeta)$ with the conditions
$\left(1\!-\!\frac{1}{\delta}\right)\leq\zeta\leq\left(1-\frac{1}{2\delta}\right)$
and $\delta\geq1$, is equivalent of getting
$z\left(\frac{F_\delta}{z}\right)^\delta\left(\frac{z(F_\delta)'}{F_\delta}\right)\in\mathcal{S}^\ast(\xi)$,
where $\xi$ is defined by the hypothesis,
$\xi:=1-\delta(1-\zeta)$ and $0\leq\xi\leq1/2$. Therefore,
\eqref{iff:relation:convex+starlike-integral} and
\eqref{eq-gener:Convex-equiv1} leads to

\begin{align*}
z\left(\dfrac{F_\delta}{z}\right)^\delta\left(\dfrac{z(F_\delta)'}{F_\delta}\right)\in\mathcal{S}^\ast(\xi)\Longleftrightarrow
0\neq\dfrac{1}{z}\left(z\left(\dfrac{F_\delta}{z}\right)^\delta\left(\dfrac{z(F_\delta)'}{F_\delta}\right)\ast
h_\xi(z)\right).
\end{align*}
Further, using logarithmic derivative of \eqref{eq-weighted-integralOperator} in the
above expression gives
\begin{align}\label{eq-convex:gener:(1)}
0\neq
&
\displaystyle\int_0^1\lambda(t)\left(\dfrac{f(tz)}{tz}\right)^\delta\left(\dfrac{tzf'(tz)}{f(tz)}\right)
dt\ast \dfrac{h_\xi(z)}{z}
\nonumber\\
=
&
\displaystyle\int_0^1\dfrac{\lambda(t)}{1-tz}dt\ast\left(\dfrac{f(z)}{z}\right)^\delta\left(\dfrac{zf'(z)}{f(z)}\right)\ast
\dfrac{h_\xi(z)}{z}.
\end{align}
Now, using a simple computation involving $z(f/z)'$, it is easy to see that $\eqref{eq-convex:gener:(1)}$ is equivalent to 
\begin{align*}
0\neq&\displaystyle\int_0^1\dfrac{\lambda(t)}{1-tz}dt\ast\left(\left(1-\dfrac{1}{\delta}\right)
\left(\dfrac{f(z)}{z}\right)^\delta+\dfrac{1}{\delta}\left(z\left(\dfrac{f(z)}{z}\right)^\delta\right)'\right)\ast
\dfrac{h_\xi(z)}{z}\\
=&\displaystyle\int_0^1\lambda(t){\,}\left(\left(1-\dfrac{1}{\delta}\right)\dfrac{h_\xi(tz)}{tz}+
\dfrac{1}{\delta}h_\xi'(tz)\right){\,}dt\ast\left(\dfrac{f(z)}{z}\right)^\delta.
\end{align*}
Substituting the value of $(f/z)^\delta$ from
\eqref{eq-Gener:convex-f/z-gamma0} will give
{\small{
\begin{align*}
0\neq\left(\displaystyle\int_0^1\lambda(t){\,}\left(\left(1-\dfrac{1}{\delta}\right)\dfrac{h_\xi(tz)}{tz}+
\dfrac{1}{\delta}h_\xi'(tz)\right){\,}dt\right)\ast\left(\beta+\dfrac{\delta(1-\beta)}{\alpha z^{\delta/\alpha}}
\int_0^z
\omega^{\delta/\alpha-1}\left(\dfrac{1+x\omega}{1+y\omega}\right)d\omega\right)
\end{align*}}}

{\footnotesize{
\begin{align*}
\quad=(1-\beta)\left(\displaystyle\int_0^1\lambda(t)\left(\dfrac{\delta}{\alpha z^{\delta/\alpha}}
\int_0^z\omega^{\delta/\alpha-1}\left(\left(1-\dfrac{1}{\delta}\right)\dfrac{h_\xi(t\omega)}{t\omega}+
\dfrac{1}{\delta}h_\xi'(t\omega)\right)d\omega \right)dt+\dfrac{\beta}{1-\beta}\right)\ast\dfrac{1+xz}{1+yz}.
\end{align*}}}
Again from \cite[Pg. 23]{Rus} the above expression
is true if, and only if,

{\small{
\begin{align*}
{\rm Re}{\,}(1-\beta)\left(\displaystyle\int_0^1\lambda(t)\left(\dfrac{\delta}{\alpha z^{\delta/\alpha}}
\int_0^z\omega^{\delta/\alpha-1}\left(\left(1-\dfrac{1}{\delta}\right)\dfrac{h_\xi(t\omega)}{t\omega}+
\dfrac{1}{\delta}h_\xi'(t\omega)\right)d\omega \right)dt+\dfrac{\beta}{1-\beta}\right)>\dfrac{1}{2}
\end{align*}}}
or equivalently,

{\small{
\begin{align*}
{\rm Re}{\,}(1-\beta)\left(\displaystyle\int_0^1\lambda(t)\left(\dfrac{\delta}{\alpha z^{\delta/\alpha}}
\int_0^z\omega^{\delta/\alpha-1}\left(\left(1-\dfrac{1}{\delta}\right)\dfrac{h_\xi(t\omega)}{t\omega}+
\dfrac{1}{\delta}h_\xi'(t\omega)\right)d\omega \right)dt+\dfrac{\beta-\frac{1}{2}}{1-\beta}\right)>0.
\end{align*}}}
Using the condition on $\beta$ given in
\eqref{Beta-Cond-Generalized:Convex}, the above inequality
reduces to

\begin{align*}
{\rm Re}\displaystyle\int_0^1\lambda(t)\left(\dfrac{\delta}{\alpha z^{\delta/\alpha}}
\int_0^z\omega^{\delta/\alpha-1}\left(\left(1-\dfrac{1}{\delta}\right)\dfrac{h_\xi(t\omega)}{t\omega}+
\dfrac{1}{\delta}h_\xi'(t\omega)\right)d\omega -q_{0,\alpha}^\delta(t)\right)dt\geq0.
\end{align*}
Changing the variable $t\omega=u$, integrating by parts with respect to $t$ and on further
using \eqref{eq-generalized:convex-q:gamma0} and
\eqref{eq-weighted:Lambda}, the above inequality gives
%
%
\begin{align*}
{\rm Re}\displaystyle\int_0^1\Lambda_\alpha^\delta(t)\dfrac{d}{dt}\left(\dfrac{\delta}{\alpha z^{\delta/\alpha}}
\int_0^{tz}u^{\delta/\alpha-1}\left(\left(1-\dfrac{1}{\delta}\right)\dfrac{h_\xi(u)}{u}+
\dfrac{1}{\delta}h_\xi'(u)\right)du -t^{\delta/\alpha}q_{0,\alpha}^\delta(t)\right)dt\geq0
\end{align*}
or equivalently,

{\footnotesize{
\begin{align*}
{\rm Re}\int_0^1t^{{\delta}/{\alpha}-1}\Lambda_\alpha^\delta(t)\left[\left(1-\dfrac{1}{\delta}\right)
\left(\dfrac{h_\xi(tz)}{tz}-\dfrac{1-\xi(1+t)}{(1-\xi)(1+t)^2}\right)+\left(\dfrac{1}{\delta}\right)
\left(h_\xi'(tz)-\dfrac{1-t-\xi(1+t)}{(1-\xi)(1+t)^3}\right)\right]dt\geq0.
\end{align*}}}

\textbf{Case (ii).} Let $\gamma>0$ ($\mu>0$, $\nu>0$). Using
the conditions \eqref{iff:relation:convex+starlike-integral}
and \eqref{eq-gener:Convex-equiv1}, the integral transform
$V_\lambda^\delta(\mathcal{W}_\beta^\delta(\alpha,\gamma))\subset
\mathcal{C}_\delta(\zeta)$, for
$1-\frac{1}{\delta}\leq\zeta\leq\left(1-\frac{1}{2\delta}\right)$,
$\delta\geq1$ is equivalent of getting

\begin{align*}
0\neq\dfrac{1}{z}\left(z\left(\dfrac{F_\delta}{z}\right)^\delta\left(\dfrac{z(F_\delta)'}{F_\delta}\right)\ast
h_\xi(z)\right),
\end{align*}
where $\xi=1-\delta(1-\zeta)$ and $0\leq\xi\leq1/2$. Hence
using \eqref{eq-Gener:Convex-Simplified-zf/z:Deriv} and
\eqref{eq-convex:gener:(1)}, a simple computation similar to case (i) reduces the
above expression to
\begin{align*}
0\neq\displaystyle\int_0^1\lambda(t){\,}\left(\left(1-\dfrac{1}{\delta}\right)\dfrac{h_\xi(tz)}{tz}+
\dfrac{1}{\delta}h_\xi'(tz)\right){\,}dt\ast\left(\dfrac{f(z)}{z}\right)^\delta.
\end{align*}
Using \eqref{eq-Gener:Convex-Simplified-zf/z:Deriv} in the above inequality provides 
\begin{align*}
0\neq&\displaystyle\int_0^1\lambda(t){\,}\left(\left(1-\dfrac{1}{\delta}\right)\dfrac{h_\xi(tz)}{tz}+
\dfrac{1}{\delta}h_\xi'(tz)\right){\,}dt\ast\left[\beta+(1-\beta)\left(\dfrac{1+xz}{1+yz}\right)\right]\ast \psi_{\mu,\nu}^\delta(z)\\
=&(1-\beta)\left(\displaystyle\int_0^1\!\!\!\lambda(t)\left(\left(1-\dfrac{1}{\delta}\right)\dfrac{h_\xi(tz)}{tz}+
\dfrac{1}{\delta}h_\xi'(tz)\right){\,}dt+\dfrac{\beta}{1-\beta}\right)\ast
\psi_{\mu,\nu}^\delta(z)\ast\left(\dfrac{1+xz}{1+yz}\right).
\end{align*}
which is true if, and only if,
\begin{align*}
{\rm Re}{\,}(1-\beta)\left(\displaystyle\int_0^1\lambda(t){\,}\left(\left(1-\dfrac{1}{\delta}\right)\dfrac{h_\xi(tz)}{tz}+
\dfrac{1}{\delta}h_\xi'(tz)\right){\,}dt+\dfrac{\beta}{1-\beta}\right)\ast
\psi_{\mu,\nu}^\delta(z)>\dfrac{1}{2}
\end{align*}
or equivalently,
\begin{align*}
{\rm Re}{\,}(1-\beta)\left(\displaystyle\int_0^1\lambda(t){\,}
\left(\left(1-\dfrac{1}{\delta}\right)\dfrac{h_\xi(tz)}{tz}+
\dfrac{1}{\delta}h_\xi'(tz)\right){\,}dt+\dfrac{\beta-\frac{1}{2}}{1-\beta}\right)\ast
\psi_{\mu,\nu}^\delta(z)>0.
\end{align*}
Using the condition on $\beta$ given in
\eqref{Beta-Cond-Generalized:Convex}, the above inequality
becomes
\begin{align*}
{\rm Re}{\,}\displaystyle\int_0^1\lambda(t){\,}
\left(\left(1-\dfrac{1}{\delta}\right)\dfrac{h_\xi(tz)}{tz}+\dfrac{1}{\delta}h_\xi'(tz)-q_{\mu,\nu}^\delta(t)\right){\,}dt\ast
\psi_{\mu,\nu}^\delta(z)\geq0
\end{align*}
which on further using \eqref{eq-Gener:convex-psi:munu} leads
to
{\small{
\begin{align*}
{\rm Re}{\,}\displaystyle\int_0^1\lambda(t)
\left(\left(1-\dfrac{1}{\delta}\right)\dfrac{h_\xi(tz)}{tz}+
\dfrac{1}{\delta}h_\xi'(tz)-q_{\mu,\nu}^\delta(t)\right)dt\ast
\int_0^1\int_0^1\dfrac{1}{(1-u^{\nu/\delta/}v^{\mu/\delta}z)}{\,}dudv\geq0
\end{align*}}}
or equivalently,
{\small{
\begin{align*}
{\rm Re}\displaystyle\int_0^1\lambda(t)\left(\dfrac{\delta^2}{\mu\nu}
\int_0^1\int_0^1\left(\left(1-\dfrac{1}{\delta}\right)\dfrac{h_\xi(tzrs)}{tzrs}+
\dfrac{1}{\delta}h_\xi'(tzrs)\right)r^{\delta/\nu-1}s^{\delta/\mu-1}drds-q_{\mu,\nu}^\delta(t)\right)dt\geq0.
\end{align*}}}
Changing the variable $tr=\omega$,  
integrating with respect to $t$ and using \eqref{eq-weighted:Lambda} leads to
{\small{
\begin{align*}
{\rm Re}{\,}\displaystyle\int_0^1\!\!\!\!\Lambda_\nu^\delta(t)\dfrac{d}{dt}\!\left(\!\dfrac{\delta^2}{\mu\nu}\!
\int_0^t\!\!\!\int_0^1\!\!\left(\!\left(\!1\!-\!\dfrac{1}{\delta}\!\right)\dfrac{h_\xi(\omega zs)}{\omega zs}\!+\!
\dfrac{1}{\delta}h_\xi'(\omega zs)\!\right)\omega^{\delta/\nu-1}s^{\delta/\mu-1}ds d\omega-t^{\delta/\nu}q_{\mu,\nu}^\delta(t)\!\right)dt\!>\!0.
\end{align*}}}
Further, using \eqref{eq-Gener:Convex-q:gamma>0} reduces the
above inequality to
\begin{align*}
{\rm Re}{\,}\displaystyle\int_0^1\Lambda_\nu^\delta(t)t^{\delta/\nu-1}\!\left(
\int_0^1\left(\left(1-\dfrac{1}{\delta}\right)\left(\dfrac{h_\xi(stz)}{stz}-\dfrac{1-\xi(1+st)}
{(1-\xi)(1+st)^2}\right)\right.\right.\\
\left.\left.+\left(\dfrac{1}{\delta}\right)\left(h_\xi'(stz)-\dfrac{1-st-\xi(1+st)}{(1-\xi)(1+st)^3}\right)\right)
s^{\delta/\mu-1}ds\right)dt\geq0.
\end{align*}
Changing the variable $ts=\eta$, 
in the above expression, 
integrating with respect to $t$ and using
\eqref{eq-weighted:Pi} gives
\begin{align*}
{\rm Re}{\,}\displaystyle\int_0^1\Pi_{\mu,\nu}^\delta(t)\dfrac{d}{dt}\left(
\int_0^t\left(\left(1-\dfrac{1}{\delta}\right)\left(\dfrac{h_\xi(\eta z)}{\eta z}-\dfrac{1-\xi(1+\eta)}
{(1-\xi)(1+\eta)^2}\right)\right.\right.\\
\left.\left.+\left(\dfrac{1}{\delta}\right)\left(h_\xi'(\eta z)-\dfrac{1-\eta-\xi(1+\eta)}{(1-\xi)(1+\eta)^3}\right)\right)
\eta^{\delta/\mu-1}d\eta\right)dt\geq0
\end{align*}
or equivalently,

{\small{
\begin{align*}
{\rm Re}\!\displaystyle\int_0^1\!\!\Pi_{\mu,\nu}^\delta(t) t^{\delta/\mu-1}
\left[\!\left(1\!-\!\dfrac{1}{\delta}\right)\!\!\left(\!\dfrac{h_\xi(tz)}{tz}\!-\!\dfrac{1\!-\!\xi(1\!+\!t)}{(1\!-\!\xi)(1\!+\!t)^2}\!\right)
\!+\!\left(\dfrac{1}{\delta}\right)\!\!\left(\!h_\xi'(tz)\!-\!\dfrac{1\!-\!t\!-\!\xi(1\!+\!t)}{(1\!-\!\xi)(1\!+\!t)^3}\!\right)\!\right]\!dt\!\geq\!0
\end{align*}}}
which clearly implies that the function
$\mathcal{M}_{\Pi_{\mu,\nu}^\delta}(h_\xi)\geq0$ and the proof
is complete.

Now, to validate the condition of sharpness for the function
$f(z)\in\mathcal{W}_\beta^\delta(\alpha,\gamma)$, satisfying
the differential equation

\begin{align}\label{eq-generalized-f/z-extremal}
\dfrac{\mu\nu}{\delta^2}{\,} z^{1-\delta/\mu}\left(z^{\delta/\mu-\delta/\nu+1}
\left( z^{\delta/\nu}\left(\dfrac{f}{z}\right)^\delta\right)'\right)'
=\beta+(1-\beta)\dfrac{1+z}{1-z}
\end{align}
with the parameter $\beta<1$ defined in
\eqref{Beta-Cond-Generalized:Convex}. From
\eqref{eq-generalized-f/z-extremal}, a simple calculation gives

\begin{align}\label{eq-Gener:Convex-Shapness}
\left(\dfrac{f}{z}\right)^\delta
=1+2(1-\beta)\sum_{n=1}^\infty \dfrac{\delta^2z^n}{(\delta+n\nu)(\delta+n\mu)}.
\end{align}
Substituting \eqref{eq-Gener:Convex-Shapness} in
\eqref{eq-Gener:Convex-Simplified-zf/z:Deriv} will give

\begin{align}\label{eq-generalized-convex-simpli-f/z-1}
z\left(\dfrac{f}{z}\right)^\delta\left(\dfrac{zf'}{f}\right)
=z+2(1-\beta)\sum_{n=1}^{\infty}\dfrac{(n+\delta)\delta z^{n+1}}{(\delta+n\nu)(\delta+n\mu)}.
\end{align}
Further, substituting
\eqref{eq-generalized-convex-simpli-f/z-1} in the expression involving the logarithmic derivative of \eqref{eq-weighted-integralOperator} leads to
\begin{align}\label{eq-Gener:Convex-F:series}
z\left(\dfrac{F_\delta}{z}\right)^\delta\left(\dfrac{z(F_\delta)'}{F_\delta}\right)
&=\int_0^1\dfrac{\lambda(t)}{t}tz\left(\dfrac{f(tz)}{tz}\right)^\delta\left(\dfrac{tzf'(tz)}{f(tz)}\right)dt\nonumber\\
&=z+2(1-\beta)\sum_{n=1}^{\infty}\dfrac{(n+\delta)\delta\tau_n z^{n+1}}{(\delta+n\nu)(\delta+n\mu)}
\end{align}
where $\tau_n=\int_0^1t^n\lambda(t)dt$. Differentiating
\eqref{eq-Gener:Convex-F:series} will give

\begin{align*}
\left(z\left(\dfrac{F_\delta}{z}\right)^\delta\left(\dfrac{z(F_\delta)'}{F_\delta}\right)\right)'
=1+2(1-\beta)\sum_{n=1}^{\infty}\dfrac{(n+1)(n+\delta)\delta\tau_n z^{n}}{(\delta+n\nu)(\delta+n\mu)}.
\end{align*}
which clearly implies
\newline
$\displaystyle
z\left.\left(z\left(\dfrac{F_\delta}{z}\right)^\delta\left(\dfrac{z(F_\delta)'}{F_\delta}\right)\right)'\right|_{z=-1}
$
\begin{align}\label{eq-gener-Convex:Mainthm:(1)}
=-1-2(1-\beta)\sum_{n=1}^{\infty}\dfrac{(-1)^{n}(n+1-\xi)(n+\delta)\delta\tau_n }{(\delta+n\nu)(\delta+n\mu)}
\quad+2(1-\beta)\xi\sum_{n=1}^{\infty}\dfrac{(-1)^{n+1}(n+\delta)\delta\tau_n }{(\delta+n\nu)(\delta+n\mu)}.
\end{align}
The series expansion of the function $q_{\mu,\nu}^\delta(t)$
defined in \eqref{eq-Gener:Convex-Integral:q:gamma>0} is

\begin{align}\label{eq-Gener:Convex-q:series}
q_{\mu,\nu}^\delta(t)=1+\dfrac{\delta}{(1-\xi)}\sum_{n=1}^\infty\dfrac{(n+\delta)(n+1-\xi)(-1)^nt^n}{(\delta+n\nu)(\delta+n\mu)}.
\end{align}
whose representation in the form of generalized hypergeometric
function is given as

\begin{align}\label{eq-gener:convex-q:hyper:series}
q_{\mu,\nu}^\delta(t)=\,_5F_4\left(1,(1+\delta),(2-\xi),\dfrac{\delta}{\mu},\dfrac{\delta}{\nu};{\,}
\delta,(1-\xi),\left(1+\dfrac{\delta}{\mu}\right),\left(1+\dfrac{\delta}{\nu}\right);{\,}-t\right).
\end{align}
Using \eqref{eq-Gener:Convex-q:series} in
\eqref{Beta-Cond-Generalized:Convex} gives
\begin{align}\label{eq:Gener:Convex-beta-series}
\dfrac{\left(\beta-\frac{1}{2}\right)}{(1-\beta)}
=-1-\dfrac{\delta}{(1-\xi)}\sum_{n=1}^\infty\dfrac{(n+\delta)(n+1-\xi)(-1)^n\tau_n}{(\delta+n\nu)(\delta+n\mu)}.
\end{align}
From \eqref{eq-Gener:Convex-F:series} and
\eqref{eq:Gener:Convex-beta-series}, the expression
\eqref{eq-gener-Convex:Mainthm:(1)} is equivalent to

\begin{align*}
z\left.\left(z\left(\dfrac{F_\delta}{z}\right)^\delta\left(\dfrac{z(F_\delta)'}{F_\delta}\right)\right)'\right|_{z=-1}
=\xi {\,}\left.z\left(\dfrac{F_\delta}{z}\right)^\delta\left(\dfrac{z(F_\delta)'}{F_\delta}\right)\right|_{z=-1},
\end{align*}
which means that the result is sharp.
\end{proof}
\begin{remark}
\begin{enumerate}[1.]
\item For $\delta=1$ and $\xi=0$, {\rm Theorem
    \ref{Thm-Gener:Convex-Mainresult}} is similar to
    {\rm\cite[Theorem 3.1]{MahnazC}}.
\item For $\delta=1$, {\rm Theorem
    \ref{Thm-Gener:Convex-Mainresult}} reduces to
    {\rm\cite[Theorem 3.1]{SarikaC}}.
\end{enumerate}
\end{remark}
The condition $\mathcal{M}_{\Pi_{\mu,\nu}^\delta}(h_\xi)\geq 0$
derived in Theorem \ref{Thm-Gener:Convex-Mainresult} is
difficult to use, therefore a simpler sufficient condition is
presented in the next result.
%
%
%
\begin{theorem}\label{Thm:Main-Gener:Convex-Decreas}
Let $\mu\in[1/2,1]$, $\nu\geq1$ and
$\left(1-\frac{1}{\delta}\right)\leq\zeta\leq
\left(1-\frac{1}{2\delta}\right)$, where $\delta\geq1$. Let
$\beta<1$ satisfy \eqref{Beta-Cond-Generalized:Convex} and

\begin{align}\label{eq-Gener:Convex-Decre-Cond}
\dfrac{t^{{1}/{\mu}(\delta-1)}\left(\delta\left(1-\frac{1}{\mu}\right)\Pi_{\mu,\nu}^\delta(t)-
t\left(\Pi_{\mu,\nu}^{\delta}(t)\right)^\prime\right)}{(\log(1/t))^{3-2\delta(1-\zeta)}}
\end{align}
is decreasing on $(0,1)$. Then the function
$\mathcal{M}_{\Pi_{\mu,\nu}^\delta}(h_\xi)(z)\geq0$, where
$\xi=1-\delta(1-\zeta)$ and $0\leq\xi\leq1/2$.
\end{theorem}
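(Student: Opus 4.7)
By Theorem \ref{Thm-Gener:Convex-Mainresult}, the inclusion $V_\lambda^\delta(\mathcal{W}_\beta^\delta(\alpha,\gamma)) \subset \mathcal{C}_\delta(\zeta)$ is equivalent to $\mathcal{M}_{\Pi_{\mu,\nu}^\delta}(h_\xi)(z) \geq 0$ for every $z \in \mathbb{D}$. I would handle the two cases $\gamma > 0$ (i.e.\ $\mu,\nu > 0$) and $\gamma = 0$ (i.e.\ $\mu = 0$, $\nu = \alpha$) in parallel, as they differ only in whether the weight is $\Pi_{\mu,\nu}^\delta$ or $\Lambda_\alpha^\delta$. The first step is to render $h_{\xi,\delta,z}(t)$ completely explicit. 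Writing the extremal function in the form $h_\xi(z)/z = (1+cz)/(1-z)^2$ and $h_\xi'(z) = (1+(1+2c)z)/(1-z)^3$ with $c = (\epsilon+2\xi-1)/(2(1-\xi))$, one verifies directly that the subtracted reference quantities $(1-\xi(1+t))/((1-\xi)(1+t)^2)$ and $(1-t-\xi(1+t))/((1-\xi)(1+t)^3)$ coincide with ${\rm Re\,}(h_\xi(tz)/(tz))$ and ${\rm Re\,}h_\xi'(tz)$ evaluated at the extremal pair $z = -1$, $\epsilon = 1$. Thus $h_{\xi,\delta,z}(t)$ is the real part of an explicit rational function of $tz$ that vanishes at this distinguished boundary configuration.

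Second, I would integrate by parts in the integral defining $\mathcal{M}_{\Pi_{\mu,\nu}^\delta}(h_\xi)(z)$. The boundary terms at $t = 0$ vanish by the hypotheses $\lim_{t \to 0^+} t^{\delta/\mu}\Pi_{\mu,\nu}^\delta(t) = 0$ and $\lim_{t \to 0^+} t^{\delta/\nu}\Lambda_\nu^\delta(t) = 0$ inherited from Theorem \ref{Thm-Gener:Convex-Mainresult}, and a product-rule expansion of $\frac{d}{dt}[t^{\delta/\mu}\Pi_{\mu,\nu}^\delta(t)]$ then produces precisely the combination $\delta(1-1/\mu)\Pi_{\mu,\nu}^\delta(t) - t(\Pi_{\mu,\nu}^\delta)'(t)$ appearing in \eqref{eq-Gener:Convex-Decre-Cond}. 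The exponent $1+2\xi = 3-2\delta(1-\zeta)$ in the denominator of \eqref{eq-Gener:Convex-Decre-Cond} is dictated by the worst boundary growth $|1-tz|^{-(1+2\xi)}$ of the rational kernel produced by $h_{\xi,\delta,z}$, which explains why the monotonicity hypothesis has to be measured on exactly this logarithmic scale.

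The final step is to apply the classical duality-type lemma of Ruscheweyh (see \cite{Rus}) that converts monotonicity of a weight, measured against $(\log(1/t))^{1+2\xi}$, into pointwise non-negativity of integrals against admissible kernels. With the weight $w(t) := t^{(\delta-1)/\mu}(\delta(1-1/\mu)\Pi_{\mu,\nu}^\delta(t) - t(\Pi_{\mu,\nu}^\delta)'(t))$ produced in the previous step and the kernel built from $h_{\xi,\delta,z}$, the decreasing hypothesis \eqref{eq-Gener:Convex-Decre-Cond} is exactly what the lemma demands, while the base-case positivity (roughly at $z = -1$) is secured by the defining identity \eqref{Beta-Cond-Generalized:Convex} for $\beta$. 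The principal obstacle is verifying admissibility of this kernel: one has to show that for arbitrary $|z| \leq 1$ and $|\epsilon| = 1$, the joint combination of ${\rm Re\,}(h_\xi(tz)/(tz))$ and ${\rm Re\,}h_\xi'(tz)$ with the respective weights $(1-1/\delta)$ and $1/\delta$ is controlled, in the appropriate integrated sense, by its value at the extremal configuration. This requires an angular analysis that must treat the two terms simultaneously rather than separately, since neither term is individually non-negative; a secondary technical point is checking that the product-rule calculation in Step 2 really produces the prefactor $(1-1/\mu)$ in the form compatible with the range $\mu \in [1/2,1]$ imposed in the hypothesis.
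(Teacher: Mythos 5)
Your first two steps do track the paper's opening moves: the paper likewise rewrites the $\tfrac{1}{\delta}$-part of $h_{\xi,\delta,z}(t)$ as a $t$-derivative and integrates by parts, arriving at
\begin{align*}
\mathcal{M}_{\Pi_{\mu,\nu}^\delta}(h_\xi)(z)=\int_0^1 t^{\delta/\mu-1}\left(\left(1-\frac{1}{\mu}\right)\Pi_{\mu,\nu}^\delta(t)-\frac{1}{\delta}\,t\left(\Pi_{\mu,\nu}^{\delta}(t)\right)^\prime\right)\left({\rm Re}\,\frac{h_\xi(tz)}{tz}-\frac{1-\xi(1+t)}{(1-\xi)(1+t)^2}\right)dt,
\end{align*}
which is your weight $w(t)$ up to a factor $\delta$. (Your identification of the reference terms as the values of ${\rm Re}\,h_\xi(tz)/(tz)$ and ${\rm Re}\,h_\xi'(tz)$ at $z=-1$, $\epsilon=1$ is correct; a small quibble is that the combination $\delta(1-\tfrac{1}{\mu})\Pi_{\mu,\nu}^\delta(t)-t(\Pi_{\mu,\nu}^\delta(t))'$ arises from differentiating $t^{\delta/\mu-\delta}\Pi_{\mu,\nu}^\delta(t)$, not $t^{\delta/\mu}\Pi_{\mu,\nu}^\delta(t)$, for which the sign of the $t\Pi'$ term would come out wrong.) The genuine gap is your third step: there is no off-the-shelf ``classical duality-type lemma of Ruscheweyh'' that converts the decreasing hypothesis \eqref{eq-Gener:Convex-Decre-Cond} into nonnegativity against this particular kernel. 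What you label the ``principal obstacle'' --- admissibility of the kernel --- is not a secondary verification; it is essentially the entire body of the paper's proof, and your proposal leaves it undone.

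Concretely, the missing work is this. Since the integrand is bounded below, the minimum principle reduces the problem to $|z|=1$, $z\neq1$; minimizing over $|\epsilon|=1$ gives ${\rm Re}\,\frac{h_\xi(tz)}{tz}\geq\frac{1}{2(1-\xi)}\left({\rm Re}\,\frac{2(1-\xi)+(2\xi-1)tz}{(1-tz)^2}-\frac{t}{|1-tz|^2}\right)$; writing $y={\rm Re}\,z$, the resulting expression $H_\Pi^{(\xi)}(y)$ is expanded as $\sum_{j\geq0}H_{j,\Pi}^{(\xi)}(1+y)^j$ for $|1+y|<2$, and each coefficient is a positive multiple of $\int_0^1 t^{\delta/\mu-1}\,w(t)\,(s_j(t)-2\xi u_j(t))\,dt$ with explicit kernels $s_j,u_j$. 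One then shows $s_j-2\xi u_j$ changes sign exactly once on $(0,1)$, at some $t_j^{(\xi)}$; the decreasing hypothesis \eqref{eq-Gener:Convex-Decre-Cond} is used to compare $t^{(\delta-1)/\mu}w(t)$ with the multiple of $(\log(1/t))^{1+2\xi}$ that matches it at $t_j^{(\xi)}$, so that the difference has the same sign as the kernel and the comparison integral is nonnegative. Finally two positivity facts are needed: $w(t)>0$ on $(0,1)$ --- this is exactly where $\nu\geq1$ enters, via $\frac{d}{dt}\left(\delta\left(1-\frac{1}{\mu}\right)\Pi_{\mu,\nu}^\delta(t)-t(\Pi_{\mu,\nu}^\delta(t))'\right)<0$ together with the value $0$ at $t=1$ --- and positivity of the base integrals $\int_0^1 t^{1/\mu-1}(s_j(t)-2\xi u_j(t))(\log(1/t))^{1+2\xi}dt$, which the paper imports from \cite[p.~280]{Aghalary}. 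None of the expansion in $(1+y)$, the single-sign-change analysis, the comparison construction, or these positivity facts appears in your proposal. Also, your remark that the ``base-case positivity is secured by \eqref{Beta-Cond-Generalized:Convex}'' is off target: the condition on $\beta$ plays no role in the proof of this theorem (it is only carried along from Theorem \ref{Thm-Gener:Convex-Mainresult}), and the Ruscheweyh duality machinery of \cite{Rus} was already spent there, not here.
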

\begin{proof}
Since the function

\begin{align*}
\mathcal{M}_{\Pi_{\mu,\nu}^\delta}(h_\xi)(z)=&\int_0^1t^{{\delta}/{\mu}-1}\Pi_{\mu,\nu}^\delta(t)
\left(\left(1-\dfrac{1}{\delta}\right)\left({\rm Re}\dfrac{h_\xi(tz)}{tz}+\dfrac{1-\xi(1+t)}{(1-\xi)(1+t)^2}\right)\right.\\
&\quad+\left.\left(\dfrac{1}{\delta}\right)
\left({\rm Re}{\,}h_\xi'(tz)-\dfrac{1-t-\xi(1+t)}{(1-\xi)(1+t)^3}\right)\right)dt,
\end{align*}
where $\xi=1-\delta(1-\zeta)$ and $0\leq\xi\leq1/2$.
Equivalently, it can also be written as

\begin{align*}
\mathcal{M}_{\Pi_{\mu,\nu}^\delta}(h_\xi)(z)=&\int_0^1 t^{{\delta}/{\mu}-1}\Pi_{\mu,\nu}^\delta(t)
\left(\left(1-\dfrac{1}{\delta}\right)\left({\rm Re}\dfrac{h_\xi(tz)}{tz}+\dfrac{1-\xi(1+t)}{(1-\xi)(1+t)^2}\right)\right.\\
&\left.\quad+\dfrac{d}{dt}\left(\dfrac{1}{\delta}\left({\rm Re}\dfrac{h_\xi(tz)}{z}-\dfrac{t(1-\xi(1+t))}{(1-\xi)(1+t)^2}\right)\right)\right)dt,
\end{align*}
which on further simplification gives

{\small{
\begin{align}\label{eq-Gener:Convex-Main-ineq}
\mathcal{M}_{\Pi_{\mu,\nu}^\delta}&(h_\xi)(z)\!=\!\left(\!1\!-\!\dfrac{1}{\delta}\!\right)\!\int_0^1\!
t^{{\delta}/{\mu}-1}\Pi_{\mu,\nu}^\delta(t)\left({\rm
Re}\dfrac{h_\xi(tz)}{tz}
\!+\!\dfrac{1\!-\!\xi(1\!+\!t)}{(1\!-\!\xi)(1\!+\!t)^2}\right)\!dt\nonumber\\
&\quad\quad+\!\int_0^1\!t^{{\delta}/{\mu}-1}\left(\dfrac{1}{\delta}\right)\left(\left(1\!-\!\dfrac{\delta}{\mu}\right)\Pi_{\mu,\nu}^\delta(t)\!
-\!t\left(\Pi_{\mu,\nu}^\delta(t)\right)'\right)\!\!\left(\!{\rm
Re}\dfrac{h_\xi(tz)}{tz}
\!+\!\dfrac{1\!-\!\xi(1\!+\!t)}{(1\!-\!\xi)(1\!+\!t)^2}\right)\!dt\nonumber\\
&=\int_0^1 t^{{\delta}/{\mu}-1}\left(\left(1\!-\!\frac{1}{\mu}\right)\Pi_{\mu,\nu}^\delta(t)\!-\!\left(\dfrac{1}{\delta}\right)
t\left(\Pi_{\mu,\nu}^{\delta}(t)\right)^\prime\right)
\left({\rm Re}\dfrac{h_\xi(tz)}{tz}-\dfrac{1-\xi(1+t)}{(1\!-\!\xi)(1+t)^2}\right)dt.
\end{align}}}
The right side of \eqref{eq-Gener:Convex-Main-ineq} is bounded
from below. So, due to the existence of lower bound, the
minimum principle states that, the minimum value of
\eqref{eq-Gener:Convex-Main-ineq} lies on the boundary i.e., on
$|z|=1$, where $z\neq1$. Now, minimizing ${\rm
Re}(h_\xi(tz)/(tz))$ with respect to $\epsilon$ will give

\begin{align*}
{\rm Re}\dfrac{h_\xi(tz)}{tz}\geq \dfrac{1}{2(1-\xi)}\left({\rm Re}\dfrac{2(1-\xi)+(2\xi-1)tz}{(1-tz)^2}-\dfrac{t}{|1-tz|^2}\right).
\end{align*}
Hence, \eqref{eq-Gener:Convex-Main-ineq} is equivalent of
obtaining

\begin{align*}
\int_0^1 t^{{\delta}/{\mu}-1}&\left(\delta\left(1-\dfrac{1}{\mu}\right)
\Pi_{\mu, \,\nu}^\delta(t)-t\left(\Pi_{\mu, \,\nu}^\delta(t)\right)^\prime\right)\\
&\left({\rm Re}\dfrac{2(1-\xi)+(2\xi-1)tz}{(1-tz)^2}
-\dfrac{t}{|1-tz|^2}-\dfrac{2(1-\xi(1+t))}{(1+t)^2}\right)dt\geq0.
\end{align*}
The equality of the above integral exist at $z=-1$. Since
$|z|=1$ and $z\neq1$, now letting ${\rm Re}z=y$ will reduce it
to considering
\begin{align*}
H_\Pi^{(\xi)}(y)\!=\!\int_0^1\! &t^{{\delta}/{\mu}-1}\left(\delta\left(1-\dfrac{1}{\mu}\right)
\Pi_{\mu, \,\nu}^\delta(t)-t\left(\Pi_{\mu, \,\nu}^\delta(t)\right)^\prime\right)\\
&\left(\!\dfrac{t(3\!-\!4(1\!\!+\!y)t\!+\!2(4y\!-\!1)t^2\!+\!4(y\!-\!1)t^3\!-\!t^4)}
{(1-2yt+t^2)^2(1+t)^2}\!-\!\dfrac{2\xi(1-t)}{(1\!-\!2yt\!+\!t^2)(1\!\!+\!t)}\!\right)\!dt\!\geq\!0
\end{align*}
where $|z|=1$ and $z\neq1$, gives $-1\leq y<1$. Since the term
$(1+y)\geq0$, $H_\Pi^{(\xi)}(y)$ can be written in
the series form as
\begin{align*}
H_\Pi^{(\xi)}(y)=\sum_{j=0}^\infty H_{j,\Pi}^{(\xi)}(1+y)^j,\quad\quad |1+y|<2.
\end{align*}
An easy computation shows that the $j$th term of
$H_{j,\Pi}^{(\xi)}$ is a positive multiple of

\begin{align*}
\tilde{H}_{j,\Pi}^{(\xi)}=\int_0^1t^{{\delta}/{\mu}-1}\left(\delta\left(1-\dfrac{1}{\mu}\right)
\Pi_{\mu, \,\nu}^\delta(t)-t\left(\Pi_{\mu, \,\nu}^\delta(t)\right)^\prime\right)(s_j(t)-2\xi u_j(t))dt,
\end{align*}
where
\begin{align*}
s_j(t):=\dfrac{(j+3)t^{j+1}}{(1+t)^{2j+4}}\left(1-2t+\dfrac{j-1}{j+3}t^2\right)\quad\quad{\rm and} \quad 
u_j(t):=\dfrac{t^{j+1}}{(1+t)^{2j+4}}(1-t^2).
\end{align*}
to give
\begin{align*}
s_j(t)-2\xi u_j(t)=\dfrac{t^{j+1}}{(1+t)^{2j+4}}\, v(t),
\end{align*}
with $v(t):=\left((j+3)(1-2t)+(j-1)t^2-2\xi(1-t^2)\right)$.

The function $v(t)$ is decreasing on $t\in(0,1)$. At $t=0$,
$v(t)$ is positive and at $t=1$, $v(t)$ is negative, which
clearly implies that the function $(s_j(t)-2\xi u_j(t))$ has
exactly one zero for $t\in(0,1)$. Set this zero by
$t_{j}^{(\xi)}$. Therefore, $(s_j(t)-2\xi u_j(t))>0$, for
$0\leq t<t_{j}^{(\xi)}$ and $(s_j(t)-2\xi u_j(t))<0$, for
$t_{j}^{(\xi)}<t<1$.

Now, define the functions

\begin{align}\label{eq-decre-H_j1}
\tilde{H}_{j}^{(\xi)}=\int_0^1 t^{{1}/{\mu}-1}(s_j(t)-2\xi u_j(t))\left(\log\left(\frac{1}{t}\right)\right)^{1+2\xi}dt
\end{align}
and

\begin{align*}
\tilde{\Pi}_{\mu,\nu}^{\delta,\xi}(t)=&t^{\frac{1}{\mu}(\delta-1)}\left(\delta\left(1-\frac{1}{\mu}\right)\Pi_{\mu,\nu}^\delta(t)-
t\left(\Pi_{\mu,\nu}^{\delta}(t)\right)^\prime\right)\\
&-\dfrac{(t_j^{(\xi)})^{\frac{1}{\mu}(\delta-1)}\left(\delta\left(\!1\!-\!\dfrac{1}{\mu}\!\right)\! \Pi_{\mu,
\,\nu}^\delta(t_j^{(\xi)})- t_j^{(\xi)}\left(\Pi_{\mu, \,\nu}^\delta(t_j^{(\xi)})\right)^\prime\right)}
{(\log(1/t_{j}^{(\xi)}))^{1+2\xi}}(\log(1/t))^{1+2\xi}.
\end{align*}
Since the hypothesis \eqref{eq-Gener:Convex-Decre-Cond} of the
theorem implies that the function

\begin{align*}
\dfrac{t^{\frac{1}{\mu}(\delta-1)}\left(\delta\left(1-\frac{1}{\mu}\right)\Pi_{\mu,\nu}^\delta(t)-
t\left(\Pi_{\mu,\nu}^{\delta}(t)\right)^\prime\right)}{(\log(1/t))^{1+2\xi}}
\end{align*}
is decreasing, where $\xi=1-\delta(1-\zeta)$ and
$0\leq\xi\leq1/2$, thus it is easy to observe that the
condition on $(s_j(t)-2\xi u_j(t))$ and the function
$\tilde{\Pi}_{\mu,\nu}^{\delta,\xi}(t)$ have same sign for
$t\in(0,1)$. Hence

\begin{align}\label{eq-decre-H_j2}
0&\leq\int_0^1 t^{\frac{1}{\mu}-1}\tilde{\Pi}_{\mu,\nu}^{\delta,\xi}(t)(s_j(t)-2\xi u_j(t))dt\nonumber \\
&=\tilde{H}_{j,\Pi}^{(\xi)}-\dfrac{(t_j^{(\xi)})^{\frac{1}{\mu}(\delta-1)}\left(\delta\left(1-\frac{1}{\mu}\right)\Pi_{\mu,
\,\nu}^\delta(t_j^{(\xi)})- t_j^{(\xi)}\left(\Pi_{\mu, \,\nu}^\delta(t_j^{(\xi)})\right)^\prime\right)}
{(\log(1/t_{j}^{(\xi)}))^{1+2\xi}}
\tilde{H}_{j}^{(\xi)}.
\end{align}
Using
\eqref{eq-weighted:Lambda} and \eqref{eq-weighted:Pi}, we have
\begin{align*}
\left(\Lambda_\nu^\delta(t)\right)'=-\lambda(t)t^{-\delta/\nu}\quad{\rm and}\quad
\left(\Pi_{\mu,\nu}^\delta(t)\right)'=-\Lambda_\nu^\delta(t)t^{-\delta/\mu+\delta/\nu-1}
\end{align*}
which clearly shows that
\begin{align*}
\dfrac{d}{dt}\left(\delta\left(1-\dfrac{1}{\mu}\right) \Pi_{\mu,\,\nu}^\delta(t)- t\left(\Pi_{\mu, \,\nu}^\delta(t)\right)^\prime\right)
&=\dfrac{d}{dt}\left(\delta\left(1-\dfrac{1}{\mu}\right) \Pi_{\mu,\,\nu}^\delta(t)+
t^{\delta/\nu-\delta/\mu}\Lambda_{\nu}^\delta(t)\right)\\
&=-\delta\left(1-\dfrac{1}{\nu}\right)t^{\delta/\nu-\delta/\mu-1}\Lambda_{\nu}^\delta(t)-t^{-\delta/\mu}\lambda(t)<0.
\end{align*}
for $\nu\geq1$ and $t\in(0,1)$. Thus, the above condition
implies
\begin{align*}
\delta\left(1-\dfrac{1}{\mu}\right) \Pi_{\mu,\,\nu}^\delta(t)- t\left(\Pi_{\mu, \,\nu}^\delta(t)\right)^\prime>0.
\end{align*}
Using similar arguments as in \cite[Page 280]{Aghalary} for the positivity of $\tilde{H}_{j}^{(\xi)}$ defined by
\eqref{eq-decre-H_j1}, from \eqref{eq-decre-H_j2}, it follows that
$\tilde{H}_{j,\Pi}^{(\xi)}\geq0$ and this completes the proof.
\end{proof}
\section{Applications of theorem \ref{Thm:Main-Gener:Convex-Decreas}}\label{Sec-Gener:Convex-Suff:Cond}

To apply Theorem \ref{Thm:Main-Gener:Convex-Decreas}, for the
case $\gamma>0$ $(\mu>0, \nu>0)$, it is required to show that
the function

\begin{align*}
\dfrac{t^{{(\delta-1)}/{\mu}}\left(\delta\left(1-{1}/{\mu}\right)\Pi_{\mu,\nu}^\delta(t)-
t\left(\Pi_{\mu,\nu}^{\delta}(t)\right)^\prime\right)}{(\log(1/t))^{3-2\delta(1-\zeta)}}
\end{align*}
is decreasing in the range $t\in(0,1)$, where $\mu\in[1/2,1]$,
$\nu\geq1$, $\delta\geq1$ and
$\left(1-\frac{1}{\delta}\right)\leq\zeta\leq
\left(1-\frac{1}{2\delta}\right)$. Since
$\xi=(1-\delta(1-\zeta))$, thus using \eqref{eq-weighted:Pi},
the above expression can be rewritten as

\begin{align*}
g(t):=\dfrac{\delta\left(1-\frac{1}{\mu}\right)t^{{\delta}/{\mu}-1/\mu}\,\Pi_{\mu,\nu}^\delta(t)+
t^{\delta/\nu-1/\mu}\,\Lambda_{\nu}^{\delta}(t)}{(\log(1/t))^{1+2\xi}},
\end{align*}
where $\xi\in[0,1/2]$. Note that the chosen function
$\lambda(t)$ satisfy the condition $\lambda(1)=0$. Therefore,
in the overall discussion, the assumed conditions hold.

Taking the derivative of $g(t)$ and using
\eqref{eq-weighted:Lambda} and \eqref{eq-weighted:Pi} will give

\begin{align*}
g'(t)=\dfrac{t^{{\delta}/{\mu}-1/\mu-1}h(t)}{\left(\log\frac{1}{t}\right)^{2(1+\xi)}}&
\left[\delta\left(1-\frac{1}{\mu}\right)\Pi_{\mu,\nu}^\delta(t)+
\left(1+\delta\left(\frac{1}{\nu}-1\right)\dfrac{\log\frac{1}{t}}{h(t)}\right)\,t^{\delta/\nu-\delta/\mu}\Lambda_{\nu}^{\delta}(t)\right.\\
&\left.-t^{1-\delta/\mu}\frac{\log\frac{1}{t}}{h(t)}\lambda(t)\right],
\end{align*}
where the function
$h(t):=\frac{1}{\mu}(\delta-1)\log\frac{1}{t}+(1+2\xi)$, which
by simple computation for $0<t<1$, $\delta\geq1$ and
$0\leq\xi\leq1/2$ gives $h(t)\geq1$.

Therefore, proving $g'(t)\leq0$ is equivalent of getting
$k(t)\leq0$, where

\begin{align*}
k(t):=\delta\left(1-\frac{1}{\mu}\right)\Pi_{\mu,\nu}^\delta(t)+
\left(1+\delta\left(\frac{1}{\nu}-1\right)\dfrac{\log\frac{1}{t}}{h(t)}\right)t^{\delta/\nu-\delta/\mu}\,\Lambda_{\nu}^{\delta}(t)
-t^{1-\delta/\mu}\frac{\log\frac{1}{t}}{h(t)}\lambda(t).
\end{align*}
Clearly $k(1)=0$ implies that if $k(t)$ is increasing function
of $t\in(0,1)$ then $g'(t)\leq0$. Hence, it is required to show
that

\begin{align*}
k'(t)=t^{\delta/\nu-{\delta}/{\mu}-1}\dfrac{l(t)}{h(t)},
\end{align*}
where

\begin{align*}
l(t):=&\left(\frac{\delta}{\nu}-\delta\right)\Lambda_{\nu}^{\delta}(t)
\left[\left(\frac{\delta}{\nu}-\frac{1}{\mu}\right)\log\frac{1}{t}+1+2\xi-
\dfrac{(1+2\xi)}{h(t)}\right]\\
&+t^{1-\delta/\nu}\lambda(t)\left[\left(\frac{1}{\mu}-\frac{\delta}{\nu}+\delta-1\right)\log\frac{1}{t}-1-2\xi
+\dfrac{(1+2\xi)}{h(t)}\right]-t^{2-\delta/\nu}\log\frac{1}{t}\lambda'(t)\geq0.
\end{align*}
Now, using the hypothesis $\lambda(1)=0$ implies that $l(1)=0$.
Therefore $l(t)$ is decreasing function of $t\in(0,1)$, i.e.,
if $l'(t)\leq0$, clearly means that the function $g(t)$ is
decreasing. Now, we calculate

{\small{
\begin{align*}
l'(t)=&\delta\left(1-\frac{1}{\nu}\right)\frac{\Lambda_{\nu}^{\delta}(t)}{t}
\left[\left(\frac{\delta}{\nu}-\frac{1}{\mu}\right)+\left(\frac{\delta}{\mu}-\frac{1}{\mu}\right)\dfrac{(1+2\xi)}{(h(t))^2}\right]
+t^{-\delta/\nu}\lambda(t)\left[(\delta-1)\left(1-\frac{1}{\mu}\right)\log\frac{1}{t}\right.\\
&\left.+\left(\frac{\delta}{\nu}-\frac{1}{\mu}+2\xi(\delta-1)\right)-\frac{(\delta-1)(1+2\xi)}{h(t)}
+\left(\frac{\delta}{\mu}-\frac{1}{\mu}\right)\frac{(1+2\xi)}{(h(t))^2}\right]\nonumber\\
&+t^{1-\delta/\nu}\lambda'(t)\left[\left(\frac{1}{\mu}+\delta-3\right)\log\frac{1}{t}-2\xi+\frac{(1+2\xi)}
{h(t)}\right]-\log\frac{1}{t}t^{2-\delta/\nu}\lambda''(t).
\end{align*}}}
Thus, the function $g'(t)\leq0$ is counterpart of the following
inequalities:

\begin{align}\label{eq-Gener:convex-Main:equiv(Cond1)}
\Lambda_{\nu}^{\delta}(t)\left[\left(\frac{1}{\mu}-\frac{\delta}{\nu}\right)(h(t))^2-(1+2\xi)\left(\frac{\delta}{\mu}-\frac{1}{\mu}\right)\right]\geq0
\end{align}
and

{\small{
\begin{align}\label{eq-Gener:convex-Main:equiv(Cond2)}
\lambda(t)\left[\left(\frac{\delta}{\nu}-\frac{1}{\mu}+2\xi(\delta-1)\right)+
(\delta-1)\left(1-\frac{1}{\mu}\right)\log\frac{1}{t}-\frac{(\delta-1)(1+2\xi)}{h(t)}
+\left(\frac{\delta}{\mu}-\frac{1}{\mu}\right)\frac{(1+2\xi)}{(h(t))^2}\right]\nonumber\\
+t\lambda'(t)\left[\left(\frac{1}{\mu}+\delta-3\right)\log\frac{1}{t}-2\xi+\frac{(1+2\xi)}
{h(t)}\right]-\log\frac{1}{t}t^{2}\lambda''(t)\leq0,
\end{align}}}
\hspace{-.10cm}for $\nu\geq1$ and $t\in(0,1)$. Letting
${(2-\delta)}/{\mu}\geq{\delta}/{\nu}$ implies that the
inequality \eqref{eq-Gener:convex-Main:equiv(Cond1)} is true,
which clearly means that the function $g(t)$ is decreasing, if
the inequality \eqref{eq-Gener:convex-Main:equiv(Cond2)} holds
along with the condition
${(2-\delta)}/{\mu}\geq{\delta}/{\nu}$, for $1\leq\delta<2$,
$\mu\in[1/2,1]$ and $\nu\geq1$.

The function $h(t)\geq1$ and
$\left(1-\delta/\mu\right)+\left(\delta/\mu-1/\mu\right)/h(t)\leq0$,
for $1/2\leq\mu\leq1$ and $\delta\geq1$. Thus the inequality
\eqref{eq-Gener:convex-Main:equiv(Cond2)} is true when

\begin{align}\label{eq-Gener:convex-Main:equiv(Cond3)}
\lambda(t)\left[\left(\frac{1}{\mu}-\frac{\delta}{\nu}-2\xi(\delta-1)\right)+
(\delta-1)\left(\frac{1}{\mu}-1\right)\log\frac{1}{t}+\left(\delta-\dfrac{\delta}{\mu}\right)\frac{(1+2\xi)}{h(t)}\right]\nonumber\\
+t\lambda'(t)\left[\left(3-\delta-\frac{1}{\mu}\right)\log\frac{1}{t}+2\xi-\frac{(1+2\xi)}{h(t)}\right]+\log\frac{1}{t}t^{2}\lambda''(t)\geq0.
\end{align}

In order to use the above condition for the application purposes, we consider the following.
For the parameters $A,B,C>0$, set
\begin{align}\label{eq-Generalized-hypergeometric-fn}
\lambda(t)=K t^{B-1}(1-t)^{C-A-B}\omega(1-t),
\end{align}
where the function
\begin{align*}
\omega(1-t)=1+\displaystyle\sum_{n=1}^\infty x_n(1-t)^n,\quad{\rm with}
\quad x_n\geq0,\quad t\in(0,1).
\end{align*}
The constant $K$ is chosen such that it satisfies normalization
condition $\int_0^1\lambda(t)dt=1$ and $(C-A-B)>0$ which
clearly implies that the function $\lambda(t)$ is zero at
$t=1$.

By an easy calculation, we get
{\small{
\begin{align}\label{eq-Generalized-hypergeometric-fn-lambda'}
\lambda'(t)=Kt^{B-2}(1-t)^{C-A-B-1}\left[\left(\frac{}{}(B-1)(1-t)-(C-A-B)t\right)\omega(1-t)-t(1-t)\omega'(1-t)\right],
\end{align}}}
and
{\small{
\begin{align}\label{eq-Generalized-hypergeometric-fn-lambda''}
\lambda^{\prime\prime}(t)=&Kt^{B-3}(1-t)^{C-A-B-2}\left[\left(\dfrac{}{}(B-1)(B-2)(1-t)^2\right.\right.\nonumber\\
&\left.\dfrac{}{}-2(B-1)(C-A-B)t(1-t)+(C-A-B)(C-A-B-1)t^2\right)\omega(1-t)\nonumber\\
&\left.+\left(\dfrac{}{}2(C-A-B)t-2(B-1)(1-t)\right)t(1-t)\omega'(1-t)+t^2(1-t)^2\omega''(1-t)\right].
\end{align}}}
Now, substituting the values of $\lambda(t)$, $\lambda'(t)$ and
$\lambda''(t)$ given in
\eqref{eq-Generalized-hypergeometric-fn},
\eqref{eq-Generalized-hypergeometric-fn-lambda'} and
\eqref{eq-Generalized-hypergeometric-fn-lambda''}, respectively
in inequality \eqref{eq-Gener:convex-Main:equiv(Cond3)} will
give the corresponding condition as

\begin{align}\label{eq-Gener:convex-Main-Gener:hyper-gamma>0}
t^2(1-t)^2\log\dfrac{1}{t}\omega''(1-t)+t(1-t)X_1(t)\omega'(1-t)+X_2(t)\omega(1-t)\geq0
\end{align}
where

{\small{
\begin{align*}
X_1(t):=\log\dfrac{1}{t}\left[(1-t)\left(\dfrac{1}{\mu}+\delta-2B-1\right)+2(C-A-B)t\right]
+(1-t)\left(-2\xi+\dfrac{(1+2\xi)}{h(t)}\right).
\end{align*}}}
and

{\footnotesize{
\begin{align*}
X_2(t):=&\log\dfrac{1}{t}\left[(1-t)^2\left[(\delta-1)\left(\frac{1}{\mu}-1\right)+(1-B)\left(\frac{1}{\mu}+\delta-B-1\right)\right]
+(C-A-B)t\times\right.\\
&\left.\left[(1-t)\left(\dfrac{1}{\mu}+\delta-2B-1\right)+(C-A-B-1)t\right]\right]+(1-t)\left[(1-t)
\left[\left(\frac{1}{\mu}\!-\!\frac{\delta}{\nu}\!-\!2\xi(\delta\!-\!B)\right)\right.\right.\\
&\left.\left.+\left(\delta+1-B-\frac{\delta}{\mu}\right)\dfrac{(1+2\xi)}{h(t)}\right]+(C-A-B)t
\left[-2\xi+\dfrac{(1+2\xi)}{h(t)}\right]\right].
\end{align*}}}
Since the function $\omega(1-t)=1+\sum_{n=1}^\infty
x_n(1-t)^n$, with the condition $x_n\geq0$, which clearly means
that the function $\omega(1-t)$, $\omega'(1-t)$ and
$\omega''(1-t)$ are non-negative for all values of $t\in(0,1)$.
Therefore, proving inequality
\eqref{eq-Gener:convex-Main-Gener:hyper-gamma>0}, it suffice to
show

\begin{align*}
X_1(t)\geq0\quad{\rm and }\quad X_2(t)\geq0.
\end{align*}
Now, in this respect the following two cases are examined:

\noindent {\bf{Case (i)}} Let $0<B\leq\delta$. By a simple adjustment, it can be easily obtained that the
    inequality $X_1(t)\geq0$ holds true if
\begin{align*}
\log\dfrac{1}{t}\left[(1-t)\left(\dfrac{1}{\mu}+\delta-2B-1\right)+2(C-A-B)t\right]
\geq2\xi(1-t),
\end{align*}
where $\xi=1-\delta(1-\zeta)$, for
$\left(1-\frac{1}{\delta}\right)\leq\zeta\leq
\left(1-\frac{1}{2\delta}\right)$. Since the right side of
the above inequality is positive for $\xi\in[0,1/2]$ and
$t\in(0,1)$, hence on using the condition
\begin{align}\label{eq-general:cond-log-1-t}
(1-t)\leq\dfrac{(1+t)}{2}\log\dfrac{1}{t},\quad t\in(0,1),
\end{align}
it is enough to get
\begin{align}\label{eq-Gener:convex-Main:ineq-gamma>0(2)}
\left(\dfrac{1}{\mu}+\delta-1-2B-\xi\right)(1-t)+2(C-A-B-\xi)t\geq0.
\end{align}
Further, the equivalent condition for $X_2(t)\geq0$ is
obtained. By the assumed hypothesis
${(2-\delta)}/{\mu}\geq{\delta}/{\nu}$ directly implies
$1/\mu\geq\delta/\nu$. Now using this condition, the
function $X_2(t)\geq0$ is valid if
{\small{
\begin{align*}
&\log\dfrac{1}{t}\left((C-A-B)t\left[(1-t)\left(\dfrac{1}{\mu}+\delta-2B-1\right)+(C-A-B-1)t\right]\right.\\
&\left.+(1-t)^2\left[(\delta-1)\left(\frac{1}{\mu}-1\right)+(1-B)\left(\frac{1}{\mu}+\delta-B-1\right)\right]\right)
+(1-t)\left(\dfrac{}{}(1-t)\times\right.\\
&\left.\left[-2\xi(\delta-B)-\left(\frac{1}{\mu}+B-1\right)\dfrac{(1+2\xi)}{h(t)}\right]-2\xi(C-A-B)t\right)\geq0.
\end{align*}}}
or equivalently,

{\small{
\begin{align}\label{eq-Gener:convex-Main-Gener:hyper-gamma>0-1}
&\log\dfrac{1}{t}\left((C-A-B)t\left[(1-t)\left(\dfrac{1}{\mu}+\delta-2B-1\right)+(C-A-B-1)t\right]\right.\nonumber\\
&\quad\left.+(1-t)^2\left[(\delta-1)\left(\frac{1}{\mu}-1\right)+(1-B)\left(\frac{1}{\mu}+\delta-B-1\right)\right]\right)\nonumber\\
\geq&(1-t)\left(2\xi(C-A-B)t+(1-t)\left[2\xi(\delta-B)+\left(\frac{1}{\mu}+B-1\right)\dfrac{(1+2\xi)}{h(t)}\right]\right).
\end{align}}}
As $0\leq B\leq\delta$, therefore using the conditions
$0\leq\xi\leq1/2$, $1/2\leq\mu\leq1$, and $(C-A-B)>0$, it
is easy to check that the coefficient of $(1-t)$ on right
side of the above expression is positive. Therefore, in
view of the inequality \eqref{eq-general:cond-log-1-t}, the
condition
\eqref{eq-Gener:convex-Main-Gener:hyper-gamma>0-1} holds
true for $t\in(0,1)$ if
\begin{align*}
&2\left((C-A-B)t\left[(1-t)\left(\dfrac{1}{\mu}+\delta-2B-1\right)+(C-A-B-1)t\right]\right.\nonumber\\
&\quad\left.+(1-t)^2\left[(\delta-1)\left(\frac{1}{\mu}-1\right)+(1-B)\left(\frac{1}{\mu}+\delta-B-1\right)\right]\right)\nonumber\\
\geq&(1+t)\left(2\xi(C-A-B)t+(1-t)\left[2\xi(\delta-B)+\left(\frac{1}{\mu}+B-1\right)\dfrac{(1+2\xi)}{h(t)}\right]\right)
\end{align*}
or equivalently,
\begin{align}\label{eq-Gener:convex-Main:ineq-gamma>0(1)}
(1-t)^2\left[2(\delta-1)\left(\frac{1}{\mu}-1\right)+2(1-B)\left(\frac{1}{\mu}+\delta-B-1\right)+R(t)\right]\nonumber\\
+2t(1-t)\left[(C-A-B)\left(\dfrac{1}{\mu}+\delta-2B-1-\xi\right)+R(t)\right]\nonumber\\
+2t^2(C-A-B)(C-A-B-1-2\xi)\geq0,
\end{align}
where
\begin{align*}
R(t):=\left(-\frac{1}{\mu}+1-B\right)\dfrac{(1+2\xi)}{h(t)}-2\xi(\delta-B).
\end{align*}
Consequently, the condition
\eqref{eq-Gener:convex-Main:ineq-gamma>0(1)} holds good if
the coefficients of $t^2$, $t(1-t)$, and $(1-t)^2$ are
positive. Now it remains to prove the following
inequalities:
\begin{align}\label{eq-Gener:convex-Main:ineq-gamma>0(11)}
(C-A-B)(C-A-B-1-2\xi)\geq0,
\end{align}
\begin{align}\label{eq-Gener:convex-Main:ineq-gamma>0(12)}
(C-A-B)\left(\dfrac{1}{\mu}+\delta-2B-1-\xi\right)+R(t)\geq0,
\end{align}
and
\begin{align}\label{eq-Gener:convex-Main:ineq-gamma>0(13)}
2(\delta-1)\left(\frac{1}{\mu}-1\right)+2(1-B)\left(\frac{1}{\mu}+\delta-B-1\right)+R(t)\geq0,
\end{align}
where $\xi=1-\delta+\delta\zeta$,
$\left(1-\frac{1}{\delta}\right)\leq\zeta\leq
\left(1-\frac{1}{2\delta}\right)$ and $\delta\geq1$.

\noindent {\bf{Case (ii)}} Consider the case when $B\geq\delta$. It is easy to observe that the
condition \eqref{eq-Gener:convex-Main:ineq-gamma>0(2)} is true when
\begin{align*}
\left(\dfrac{1}{\mu}+\delta-1-\xi\right)\geq2B,
\end{align*}
which clearly implies when $B\leq\delta$. Hence this case is not valid.

With the availability of the conditions given above we prove the result for 
the case $\gamma>0$ $(\mu>0,\nu>0)$ and $\lambda(t)$ defined in
\eqref{eq-Generalized-hypergeometric-fn}.
\begin{theorem}\label{Thm-Gener:Convex-Generalized-hypergeometric-fn:gamma>0}
Let $A,B,C>0$, $1/2\leq\mu\leq1\leq\nu$ and
$1-\frac{1}{\delta}\leq\zeta\leq1-\frac{1}{2\delta}$, for
$1\leq\delta\leq2$. Let $\beta<1$ satisfy
\begin{align*}
\dfrac{\beta-\frac{1}{2}}{1-\beta}=-K\int_0^1 t^{B-1}(1-t)^{C-A-B}\omega(1-t)
q_{\mu,\nu}^\delta(t) dt,
\end{align*}
where $q_{\mu,\nu}^\delta(t)$ is defined by the differential
equation \eqref{eq-Gener:Convex-q:gamma>0}, the constant $K$
and the function $\omega(1-t)$ is given in
\eqref{eq-Generalized-hypergeometric-fn}. Then for
$f(z)\in\mathcal{W}_\beta^\delta(\alpha,\gamma)$, the function
\begin{align*}
H_{A,\,B,\,C}^\delta(f)(z)=\left(K\int_0^1 t^{B-1}(1-t)^{C-A-B}\omega(1-t) \left(\frac{f(tz)}{t}\right)^\delta dt\right)^{1/\delta}
\end{align*}
belongs to $\mathcal{C}_\delta(\zeta)$ for the condition
${(2-\delta)}/{\mu}\geq{\delta}/{\nu}$, if
{\small{
\begin{align*}
C\geq A+B+2\quad{\rm and}\quad
B\leq\min\left\{\dfrac{1}{4}\left(\dfrac{1}{\mu}-3+\delta(3-2\zeta)\right)\,,\,
\dfrac{2}{\left(\delta+1/\mu\right)}\left(\dfrac{(2\delta-1)}{\mu}-\delta+1\right)\right\}.
\end{align*}}}
\end{theorem}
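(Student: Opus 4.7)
The plan is to apply Theorem~\ref{Thm:Main-Gener:Convex-Decreas} to the specific weight $\lambda(t) = Kt^{B-1}(1-t)^{C-A-B}\omega(1-t)$, so that the conclusion $V_\lambda^\delta(\mathcal{W}_\beta^\delta(\alpha,\gamma))\subset \mathcal{C}_\delta(\zeta)$ reduces to verifying that the function in \eqref{eq-Gener:Convex-Decre-Cond} is decreasing on $(0,1)$. The discussion preceding the statement has already performed most of the reduction: the monotonicity follows from the two pointwise inequalities \eqref{eq-Gener:convex-Main:equiv(Cond1)} and \eqref{eq-Gener:convex-Main:equiv(Cond2)}, and after substitution of $\lambda$, $\lambda'$, $\lambda''$ from \eqref{eq-Generalized-hypergeometric-fn}--\eqref{eq-Generalized-hypergeometric-fn-lambda''} into \eqref{eq-Gener:convex-Main:equiv(Cond3)}, together with $\omega,\omega',\omega''\geq 0$, the remaining task is exactly to check $X_1(t)\geq 0$ and $X_2(t)\geq 0$ in \eqref{eq-Gener:convex-Main-Gener:hyper-gamma>0}. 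The side hypothesis $(2-\delta)/\mu \geq \delta/\nu$ implies $1/\mu \geq \delta/\nu$, which secures \eqref{eq-Gener:convex-Main:equiv(Cond1)} immediately.

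For the positivity of $X_1(t)$, I would apply the elementary estimate \eqref{eq-general:cond-log-1-t} to absorb the negative $-2\xi(1-t)/h(t)$ term, thereby reducing to the linear inequality \eqref{eq-Gener:convex-Main:ineq-gamma>0(2)}. It then suffices to show that both coefficients of $(1-t)$ and $t$ there are nonnegative. The coefficient $2(C-A-B-\xi)$ is handled by $C\geq A+B+2$ together with $\xi\leq 1/2$, and the coefficient $(\tfrac{1}{\mu}+\delta-1-2B-\xi)$ is controlled by the first threshold $B \leq \tfrac{1}{4}\bigl(\tfrac{1}{\mu}-3+\delta(3-2\zeta)\bigr)$; a brief check using $\xi = 1-\delta+\delta\zeta$ and $1/\mu \geq 1 \geq 1-\delta$ confirms that this first threshold is indeed strong enough.

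For $X_2(t)\geq 0$, the plan is the same: invoke \eqref{eq-general:cond-log-1-t} to remove the logarithmic factor and reduce to the quadratic-in-$t$ inequality \eqref{eq-Gener:convex-Main:ineq-gamma>0(1)}. It is then enough that the coefficients of $t^2$, $t(1-t)$ and $(1-t)^2$ be separately nonnegative, which are the three inequalities \eqref{eq-Gener:convex-Main:ineq-gamma>0(11)}--\eqref{eq-Gener:convex-Main:ineq-gamma>0(13)}. The first, $(C-A-B)(C-A-B-1-2\xi)\geq 0$, is immediate from $C-A-B\geq 2 \geq 1+2\xi$. For \eqref{eq-Gener:convex-Main:ineq-gamma>0(12)} and \eqref{eq-Gener:convex-Main:ineq-gamma>0(13)}, I would use $h(t)\geq 1+2\xi$ for $t\in(0,1)$ and the sign $-\tfrac{1}{\mu}+1-B \leq 0$ to obtain a uniform lower bound on the $t$-dependent term $R(t)$, and then verify the resulting $t$-free inequalities using the stated bounds on $B$ and $C$.

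The main obstacle is the uniform control of $R(t) = (-\tfrac{1}{\mu}+1-B)(1+2\xi)/h(t) - 2\xi(\delta-B)$ in \eqref{eq-Gener:convex-Main:ineq-gamma>0(13)}: since $R(t)$ attains its smallest value in the limit $h(t)\to 1+2\xi$ (i.e.\ $t\to 1^-$), the worst case produces an additional linear condition on $B$, and tracking it gives precisely the second threshold $B \leq \tfrac{2}{\delta+1/\mu}\bigl(\tfrac{2\delta-1}{\mu}-\delta+1\bigr)$. Combining this with the threshold obtained for $X_1$ yields the minimum appearing in the statement, finishing the proof. The complementary case $B\geq \delta$ is ruled out as already noted in Case (ii) of the preceding discussion, so only the case $0 < B \leq \delta$ needs to be carried out.
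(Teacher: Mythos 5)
Your proposal is, in structure, exactly the paper's own proof: the paper likewise feeds this $\lambda$ into Theorem \ref{Thm:Main-Gener:Convex-Decreas}, leans on the reduction carried out in the discussion preceding the theorem, and verifies precisely the four inequalities \eqref{eq-Gener:convex-Main:ineq-gamma>0(2)}, \eqref{eq-Gener:convex-Main:ineq-gamma>0(11)}, \eqref{eq-Gener:convex-Main:ineq-gamma>0(12)} and \eqref{eq-Gener:convex-Main:ineq-gamma>0(13)} --- the first two from $C-A-B\geq 2\geq 1+2\xi$ together with the first threshold (which equals $\tfrac{1}{4}\bigl(\tfrac{1}{\mu}+\delta-1-2\xi\bigr)$), the last two from the worst-case bound $R(t)\geq\bigl(1-\tfrac{1}{\mu}-B\bigr)-2\xi(\delta-B)\geq-\bigl(\tfrac{1}{\mu}+\delta-1\bigr)$, valid because $(1+2\xi)/h(t)\leq 1$ with the extreme at $t\to 1^-$. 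So there is no difference of route to report; however, two of your individual claims do not hold as written.

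First, the weakened consequence $1/\mu\geq\delta/\nu$ does \emph{not} by itself secure \eqref{eq-Gener:convex-Main:equiv(Cond1)}: for $\delta>1$ the bracket there contains the negative contribution $-(1+2\xi)(\delta-1)/\mu$, so taking for instance $1/\mu=\delta/\nu$ makes it strictly negative. What is needed (and what the paper uses) is the full side hypothesis $(2-\delta)/\mu\geq\delta/\nu$, which gives $\tfrac{1}{\mu}-\tfrac{\delta}{\nu}\geq\tfrac{\delta-1}{\mu}$ and, combined with $h(t)^2\geq h(t)\geq 1+2\xi$, yields \eqref{eq-Gener:convex-Main:equiv(Cond1)}; the implication $1/\mu\geq\delta/\nu$ is only used later, to drop the nonnegative difference $\tfrac{1}{\mu}-\tfrac{\delta}{\nu}$ inside $X_2(t)$. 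Second --- and here the paper itself has the identical defect --- tracking the worst case of $R(t)$ in \eqref{eq-Gener:convex-Main:ineq-gamma>0(13)} does not give ``precisely'' the stated second threshold: bounding $R(t)\geq-\bigl(\tfrac{1}{\mu}+\delta-1\bigr)$ and discarding $2B^2$ leads to the requirement $2B\bigl(\delta+\tfrac{1}{\mu}\bigr)\leq\tfrac{2\delta-1}{\mu}-\delta+1$, i.e.\ a threshold with coefficient $\tfrac{1}{2(\delta+1/\mu)}$, four times smaller than the $\tfrac{2}{\delta+1/\mu}$ appearing in the statement. The paper's ``which is clearly true'' glosses over exactly this gap, so if your argument is to cover the whole stated range of $B$ you must either sharpen the lower bound on $R(t)$ (keeping the term $-2\xi(\delta-B)$ and the quadratic $2B^2$ instead of the crude bound) or concede that the constant in the theorem's second threshold needs correcting.
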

\begin{proof}
In order to prove the result, it is enough to get the
inequalities \eqref{eq-Gener:convex-Main:ineq-gamma>0(2)},
\eqref{eq-Gener:convex-Main:ineq-gamma>0(11)},
\eqref{eq-Gener:convex-Main:ineq-gamma>0(12)} and
\eqref{eq-Gener:convex-Main:ineq-gamma>0(13)} by using the
above hypothesis.

The inequalities \eqref{eq-Gener:convex-Main:ineq-gamma>0(2)}
and \eqref{eq-Gener:convex-Main:ineq-gamma>0(11)} are true if
$(C-A-B)\geq1+2\xi$ and $2B\leq(1/\mu+\delta-1-\xi)$, where
$\xi=1-\delta(1-\zeta)$. Since the parameters $(C-A-B)>2$ and
$4B\leq(1/\mu+\delta-1-2\xi)$, directly implies that these two
inequalities hold. Moreover, to show the existence of
inequality \eqref{eq-Gener:convex-Main:ineq-gamma>0(12)} under
the given hypothesis, it is enough to prove
\begin{align*}
(C-A-B)\left(\dfrac{1}{\mu}+\delta-2B-1-\xi\right)\geq \left(\frac{1}{\mu}+\delta-1\right)
\end{align*}
or equivalently,
\begin{align*}
(C-A-B-2)\left(\dfrac{1}{\mu}+\delta-2B-1-\xi\right)+\left(\frac{1}{\mu}+\delta-1-2\xi-4B\right)\geq0,
\end{align*}
that can be shown easily. Finally, to prove inequality
\eqref{eq-Gener:convex-Main:ineq-gamma>0(13)}, it is sufficient
to get
\begin{align*}
2(\delta-1)\left(\frac{1}{\mu}-1\right)+2(1-B)\left(\frac{1}{\mu}+\delta-B-1\right)\geq\left(\frac{1}{\mu}+\delta-1\right).
\end{align*}
By simple computation, the above condition is true if
\begin{align*}
\left(\frac{2\delta}{\mu}-\dfrac{1}{\mu}-\delta+1\right)-2B\left(\delta+\frac{1}{\mu}\right)\geq0,
\end{align*}
which is clearly true. Hence by the given hypothesis and
Theorem \ref{Thm:Main-Gener:Convex-Decreas}, the result
directly follows.
\end{proof}
So far, the case $\gamma>0$ was discussed in detail. Now, to
apply Theorem \ref{Thm-Gener:Convex-Mainresult} for the case
$\gamma=0$ $(\mu=0,\,\nu=\alpha\geq0)$, it is required to show
that the function

\begin{align*}
a(t):=\dfrac{\delta\left(1-\frac{1}{\alpha}\right)t^{{(\delta-1)}/{\alpha}}\Lambda_{\alpha}^\delta(t)+
t^{1-1/\alpha}\lambda(t)}{(\log(1/t))^{1+2\xi}}
\end{align*}
is decreasing on $t\in(0,1)$, where $\xi=1-\delta(1-\zeta)$,
for $1/2\leq\alpha\leq1$, $0\leq\xi\leq1/2$ and $\delta\geq1$.
Now, differentiating $a(t)$ and on using
\eqref{eq-weighted:Lambda} will give

\begin{align*}
a'(t)=\dfrac{p(t)\,t^{\delta/\alpha-1/\alpha-1}}{(\log(1/t))^{2+2\xi}}\,\,b(t),
\end{align*}
where

\begin{align*}
&b(t):=\delta\left(1-\dfrac{1}{\alpha}\right)\Lambda_\alpha^\delta(t)+\left(1-\dfrac{(\delta-1)\log(1/t)}{p(t)}\right)
t^{1-\delta/\alpha}\lambda(t)+\dfrac{\log(1/t)}{p(t)}t^{2-\delta/\alpha}\lambda'(t)\\
{\rm and}&\\
&p(t):=\frac{1}{\alpha}(\delta-1)\log\frac{1}{t}+(1+2\xi).
\end{align*}
When $\delta\geq1$, $\alpha\in[1/2,1]$ and $\xi\in[0,1/2]$, it
can be easily seen that the function $p(t)\geq1$, for
$t\in(0,1)$. Hence, proving $a'(t)\leq0$ is equivalent of
getting $b(t)\leq0$. Assuming $\lambda(1)=0$ will give
$b(1)=0$. Hence, if $b(t)$ is increasing function of
$t\in(0,1)$, then $a'(t)\leq0$ and this completes the proof.
Now

\begin{align*}
b'(t)=\dfrac{t^{-\delta/\alpha}}{p(t)}&\left[(\delta-1)\lambda(t)\left(\left(\dfrac{1}{\alpha}-1\right)
\log\frac{1}{t}-1-2\xi+\dfrac{(1+2\xi)}{p(t)}\right)\right.\\
&\quad\left.+t\lambda'(t)\left(\left(3-\delta-\dfrac{1}{\alpha}\right)\log\frac{1}{t}+1+2\xi-\dfrac{(1+2\xi)}{p(t)}\right)
+\log\dfrac{1}{t}\,\,t^2\lambda''(t)\right].
\end{align*}
Therefore, $b'(t)\geq0$, if

\begin{align}\label{eq-Gener:convex-Main:equiv(Cond3)gamma0}
&(\delta-1)\lambda(t)\left(\left(\dfrac{1}{\alpha}-1\right)
\log\frac{1}{t}-1-2\xi+\dfrac{(1+2\xi)}{p(t)}\right)\nonumber\\
&+t\lambda'(t)\left(\left(3-\delta-\dfrac{1}{\alpha}\right)\log\frac{1}{t}+1+2\xi-\dfrac{(1+2\xi)}{p(t)}\right)
+\log\dfrac{1}{t}\,\,t^2\lambda''(t)\geq0.
\end{align}
Now, using $\lambda(t)$, $\lambda'(t)$ and $\lambda''(t)$ given
in \eqref{eq-Generalized-hypergeometric-fn},
\eqref{eq-Generalized-hypergeometric-fn-lambda'} and
\eqref{eq-Generalized-hypergeometric-fn-lambda''}, respectively
in inequality \eqref{eq-Gener:convex-Main:equiv(Cond3)gamma0},
will give the corresponding condition as
\begin{align}\label{eq-Gener:convex-MainCond-Gener:hyper-gamma0}
t^2(1-t)^2\,\log\dfrac{1}{t}\,\,\omega''(1-t)+t(1-t)\,\,X_3(t)\,\,\omega'(1-t)+X_4(t)\,\omega(1-t)\geq0
\end{align}
where
{\small{
\begin{align*}
X_3(t):=\log\dfrac{1}{t}\left[(1-t)\left(\dfrac{1}{\alpha}+\delta-2B-1\right)+2(C-A-B)t\right]
-(1+2\xi)(1-t)\left[1-\dfrac{1}{p(t)}\right]
\end{align*}}}
and
\begin{align*}
X_4(t):=&\log\dfrac{1}{t}\left[(1-t)^2\left[(\delta-1)\left(\frac{1}{\alpha}-1\right)+(1-B)\left(\frac{1}{\alpha}+\delta-B-1\right)\right]\right.\\
&\left.+(C-A-B)t\times\left[(1-t)\left(\dfrac{1}{\alpha}+\delta-2B-1\right)+(C-A-B-1)t\right]\right]\\
&+(1+2\xi)(1-t)\left(\dfrac{}{}(B-\delta)(1-t)
-(C-A-B)t\right)\left[1-\dfrac{1}{p(t)}\right].
\end{align*}
As, the functions $\omega(1-t)$, $\omega'(1-t)$ and
$\omega''(1-t)$ are non-negative for all values of $t\in(0,1)$,
therefore to prove inequality
\eqref{eq-Gener:convex-MainCond-Gener:hyper-gamma0}, it is
enough to show
\begin{align*}
X_3(t)\geq0\quad{\rm and }\quad X_4(t)\geq0.
\end{align*}
Now, we divide the proof into two cases:

\noindent{\bf{Case (i)}} Let $0<B\leq \delta$. Since the function
    $p(t)$ defined before is non-negative, therefore by a
    small adjustment, the inequality $X_3(t)\geq0$ is
    valid, if
\begin{align*}
\log\dfrac{1}{t}\left[(1-t)\left(\dfrac{1}{\alpha}+\delta-2B-1\right)+2(C-A-B)t\right]
\geq(1+2\xi)(1-t),
\end{align*}
where the parameter $\xi$ is defined above. It is easy to
see that the right side of the above inequality is
positive, hence applying the condition
\eqref{eq-general:cond-log-1-t}, the inequality is true
when
\begin{align}\label{eq-Gener:convex-Main:ineq-gamma0(11)}
(1-t)\left[2\left(\dfrac{1}{\alpha}+\delta-2B-\xi\right)-3\right]+2t\left[\dfrac{}{}2(C-A-B-\xi)-1\right]\geq0.
\end{align}
By the assumptions $B\leq\delta$ and $(C-A-B)>0$, the
condition $X_4\geq0$, holds good if
\begin{align*}
&\log\dfrac{1}{t}\left((1-t)^2\left[(\delta-1)\left(\frac{1}{\alpha}-1\right)+(1-B)\left(\frac{1}{\alpha}+\delta-B-1\right)\right]\right.\\
&\left.+(C-A-B)t\times\left[(1-t)\left(\dfrac{1}{\alpha}+\delta-2B-1\right)+(C-A-B-1)t\right]\right)\\
\geq&(1+2\xi)(1-t)\left(\dfrac{}{}(\delta-B)(1-t)+(C-A-B)t\right).
\end{align*}
For $t\in(0,1)$, the right side term of the above
inequality is positive, hence in view of the condition
\eqref{eq-general:cond-log-1-t}, the above inequality can
be obtained if
{\small{
\begin{align}\label{eq-Gener:convex-Main:ineq-gamma01}
(1-t)^2\left[2\left(\delta-1\right)\left(\dfrac{1}{\alpha}-1\right)+2(1-B)\left(\dfrac{1}{\alpha}+\delta-B-1\right)-(1+2\xi)(\delta-B)\right]\nonumber\\
+t(1-t)\left[(C-A-B)\left(2\left(\dfrac{1}{\alpha}+\delta-2B-1\right)-(1+2\xi)\right)-2(1+2\xi)(\delta-B)\right]\nonumber\\
+2t^2(C-A-B)(C-A-B-2-2\xi)\geq0.
\end{align}}}
Thus, the condition
\eqref{eq-Gener:convex-Main:ineq-gamma01} is true, if the
coefficients of $t^2$, $t(1-t)$, and $(1-t)^2$ are
positive. Now, it remains to prove the following
inequalities:
{\small{
\begin{align}\label{eq-Gener:convex-Main:ineq-gamma0:11}
2\left(\delta-1\right)\left(\dfrac{1}{\alpha}-1\right)+2(1-B)\left(\dfrac{1}{\alpha}+\delta-B-1\right)-(1+2\xi)(\delta-B)\geq0,
\end{align}
\begin{align}\label{eq-Gener:convex-Main:ineq-gamma0:12}
(C-A-B)\left(2\left(\dfrac{1}{\alpha}+\delta-2B-1\right)-(1+2\xi)\right)-2(1+2\xi)(\delta-B)\geq0,
\end{align}
and
\begin{align}\label{eq-Gener:convex-Main:ineq-gamma0:13}
(C-A-B)(C-A-B-2-2\xi)\geq0
\end{align}}}
where $\xi=1-\delta(1-\zeta)$, for
$\left(1-\frac{1}{\delta}\right)\leq\zeta\leq
\left(1-\frac{1}{2\delta}\right)$ and $\delta\geq1$.

\noindent{\bf{Case (ii)}} $B\geq\delta$. It is easy to note that the condition
\eqref{eq-Gener:convex-Main:ineq-gamma0(11)} is true when
\begin{align*}
4B\leq2\left(\dfrac{1}{\alpha}+\delta-\xi\right)-3,
\end{align*}
which clearly means that $B\leq\delta$. Therefore this case is not valid.

Now, for the case $\gamma=0$ $(\mu=0,\nu=\alpha>0)$ and
$\lambda(t)$ defined in
\eqref{eq-Generalized-hypergeometric-fn}, the following result
is stated as under.
\begin{theorem}\label{Thm-Gener:Convex-Generalized-hypergeometric-fn:gamma0}
Let $A,B,C>0$, $1/2\leq\alpha\leq1$ and
$1-\frac{1}{\delta}\leq\zeta\leq1-\frac{1}{2\delta}$, for
$\delta\geq3$. Let $\beta<1$ satisfy
\begin{align*}
\dfrac{\beta-\frac{1}{2}}{1-\beta}=-K\int_0^1 t^{B-1}(1-t)^{C-A-B}\omega(1-t)
q_{0,\alpha}^\delta(t) dt,
\end{align*}
where $q_{0,\alpha}^\delta(t)$  is defined by the differential
equation \eqref{eq-generalized:convex-q:gamma0}, the constant
$K$ and the function $\omega(1-t)$ is given in
\eqref{eq-Generalized-hypergeometric-fn}. Then for
$f(z)\in\mathcal{W}_\beta^\delta(\alpha,0)$, the function

\begin{align*}
H_{A,\,B,\,C}^\delta(f)(z)=\left(K\int_0^1 t^{B-1}(1-t)^{C-A-B}\omega(1-t) \left(\frac{f(tz)}{t}\right)^\delta dt\right)^{1/\delta}
\end{align*}
belongs to $\mathcal{C}_\delta(\zeta)$, if

{\small{
\begin{align*}
C\geq A+B+3\quad{\rm and}\quad
B\leq\min\left\{\dfrac{1}{2}\left(\dfrac{1}{\alpha}+\delta-2\right)\,,\,
\dfrac{\delta\left(\frac{1}{\alpha}-1\right)}{\left(\frac{1}{\alpha}+\delta-1\right)}\,,\,
\dfrac{1}{4}\left(\dfrac{3}{\alpha}+\delta-6\right)\right\}.
\end{align*}}}
\end{theorem}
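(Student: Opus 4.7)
The plan is to parallel the proof of Theorem \ref{Thm-Gener:Convex-Generalized-hypergeometric-fn:gamma>0}. The preliminary discussion just preceding the theorem has already reduced the problem, via Theorem \ref{Thm-Gener:Convex-Mainresult} and the $\gamma=0$ analog of Theorem \ref{Thm:Main-Gener:Convex-Decreas}, to showing that the auxiliary function $a(t)$ is decreasing on $(0,1)$. Substituting $\lambda(t)$, $\lambda'(t)$ and $\lambda''(t)$ from \eqref{eq-Generalized-hypergeometric-fn}--\eqref{eq-Generalized-hypergeometric-fn-lambda''} into \eqref{eq-Gener:convex-Main:equiv(Cond3)gamma0}, and using the non-negativity of $\omega(1-t)$, $\omega'(1-t)$, $\omega''(1-t)$ (guaranteed by $x_n\geq 0$), this further collapses to verifying the four pointwise inequalities \eqref{eq-Gener:convex-Main:ineq-gamma0(11)}, \eqref{eq-Gener:convex-Main:ineq-gamma0:11}, \eqref{eq-Gener:convex-Main:ineq-gamma0:12} and \eqref{eq-Gener:convex-Main:ineq-gamma0:13} under the hypotheses $C\geq A+B+3$ and the threefold minimum bound on $B$. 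So the whole task is to check these four inequalities.

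First I would dispose of the two straightforward ones. Inequality \eqref{eq-Gener:convex-Main:ineq-gamma0:13} follows immediately from $C-A-B\geq 3$ and $\xi\leq 1/2$, which together give $C-A-B-2-2\xi\geq 0$. For \eqref{eq-Gener:convex-Main:ineq-gamma0(11)}, the bound $B\leq \tfrac{1}{2}(1/\alpha+\delta-2)$ forces $2(1/\alpha+\delta-2B-\xi)-3\geq 1-2\xi\geq 0$, while $C-A-B\geq 3$ handles the coefficient of $t$.

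Next I would attack \eqref{eq-Gener:convex-Main:ineq-gamma0:12}. Using $B\leq \tfrac{1}{4}(3/\alpha+\delta-6)$ together with $\delta\geq 3$ and $\alpha\in[1/2,1]$, one first checks that $2(1/\alpha+\delta-2B-1)-(1+2\xi)\geq 0$. Then, since $C-A-B\geq 3$, it suffices to establish $3(2(1/\alpha+\delta-2B-1)-(1+2\xi))\geq 2(1+2\xi)(\delta-B)$, a linear inequality in $B$ that reduces, after a short rearrangement, to the same bound on $B$ combined with $\delta\geq 3$ and $\xi\leq 1/2$.

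The main obstacle is \eqref{eq-Gener:convex-Main:ineq-gamma0:11}, which after expansion and collection of powers of $B$ becomes the quadratic $2B^{2}+B(2\xi+1-2/\alpha-2\delta)+\delta(2/\alpha-1-2\xi)\geq 0$. One observes that its value at $B=0$ equals $\delta(2/\alpha-1-2\xi)\geq 0$ (from $\alpha\leq 1$ and $\xi\leq 1/2$), the linear coefficient is negative in the admissible parameter range, and the leading coefficient is positive. Consequently the quadratic is non-negative on a closed interval $[0,B_{1}]$, and the hypothesis $B\leq \delta(1/\alpha-1)/(1/\alpha+\delta-1)$ is precisely calibrated so that $B\leq B_{1}$; this can be verified either by direct substitution into the quadratic or, more transparently, by using the elementary inequality $(1-B)(1/\alpha+\delta-B-1)\geq (1-B)(\delta-B)$, valid since $\alpha\leq 1$, and rearranging. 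Once all four inequalities are established, the theorem follows by invoking the $\gamma=0$ version of Theorem \ref{Thm:Main-Gener:Convex-Decreas} and then Theorem \ref{Thm-Gener:Convex-Mainresult}.
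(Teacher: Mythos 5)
Your proposal is correct and follows the same route as the paper's proof: the problem is reduced, exactly as in the discussion preceding the theorem, to the four inequalities \eqref{eq-Gener:convex-Main:ineq-gamma0(11)}, \eqref{eq-Gener:convex-Main:ineq-gamma0:11}, \eqref{eq-Gener:convex-Main:ineq-gamma0:12} and \eqref{eq-Gener:convex-Main:ineq-gamma0:13}, and your verifications of the first, third and fourth agree with the paper's (estimate $1+2\xi\leq 2$, then use $C-A-B\geq3$ and the stated bounds on $B$). In fact you supply more detail than the paper for \eqref{eq-Gener:convex-Main:ineq-gamma0:11}, which the paper merely asserts to follow from $\delta(1/\alpha-1)\geq(1/\alpha+\delta-1)B$.

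One warning about that step, though. Your primary verification is sound: writing the left side of \eqref{eq-Gener:convex-Main:ineq-gamma0:11} as $Q(B)=2B^{2}-\left(2w+2\delta+\eta\right)B+\delta\left(2w+\eta\right)$, with $w=1/\alpha-1\in[0,1]$ and $\eta=1-2\xi\in[0,1]$, direct substitution of $B_{*}=\delta w/(w+\delta)$ gives
\begin{align*}
Q(B_{*})=\dfrac{\delta^{2}\left(2w^{2}+\eta w+\eta\delta\right)}{(w+\delta)^{2}}\geq0,
\end{align*}
and since $B_{*}\leq1$ lies to the left of the vertex $(2w+2\delta+\eta)/4\geq3/2$, indeed $Q\geq0$ on $[0,B_{*}]$. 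But your ``more transparent'' alternative fails: replacing $(1-B)(1/\alpha+\delta-B-1)$ by $(1-B)(\delta-B)$ discards $2(1-B)w$, and the remaining quantity $2(\delta-1)w+(\delta-B)(\eta-2B)$ can be strictly negative at $B=B_{*}$. For instance, $\delta=3$, $\zeta=1-1/(2\delta)$ (so $\eta=0$) and $1/\alpha=1.1$ give $B_{*}=0.3/3.1$ and $2(\delta-1)w+(\delta-B_{*})(\eta-2B_{*})\approx0.4-0.562<0$, whereas $Q(B_{*})\approx0.019>0$. The discarded term is of first order in $w$ while $Q(B_{*})$ is only of second order when $\eta=0$, so that elementary inequality loses too much; keep the direct substitution.
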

\begin{proof}
In order to prove the result, it is enough to show the
inequalities \eqref{eq-Gener:convex-Main:ineq-gamma0(11)},
\eqref{eq-Gener:convex-Main:ineq-gamma0:11},
\eqref{eq-Gener:convex-Main:ineq-gamma0:12} and
\eqref{eq-Gener:convex-Main:ineq-gamma0:13} by using the above
hypothesis. The inequality
\eqref{eq-Gener:convex-Main:ineq-gamma0(11)} is valid if
$(C-A-B)\geq1$ and $2B\leq(1/\alpha+\delta-2)$, and
\eqref{eq-Gener:convex-Main:ineq-gamma0:11} is true when
$\delta\left({1}/{\alpha}-1\right)\geq\left({1}/{\alpha}+\delta-1\right)B$.
Since the parameters $(C-A-B)\geq3$ and conditions on $B$
holds, which directly implies that these two inequalities along
with the condition \eqref{eq-Gener:convex-Main:ineq-gamma0:13}
are true.

Further, to prove inequality
\eqref{eq-Gener:convex-Main:ineq-gamma0:12}, it is sufficient
to get the condition

\begin{align*}
(C-A-B)\left(2\left(\dfrac{1}{\alpha}+\delta-2B-1\right)-(1+2\xi)\right)\geq2(1+2\xi)(\delta-B),
\end{align*}
By simple computation, the above expression is holds, if

\begin{align*}
(C-A-B-3)\left(\dfrac{1}{\alpha}+\delta-2B-2\right)+\left(\dfrac{3}{\alpha}+\delta-4B-6\right)\geq0,
\end{align*}
which is clearly true. Hence by the given hypothesis and
Theorem \ref{Thm:Main-Gener:Convex-Decreas} the result directly
follows.
\end{proof}
Let
\begin{align*}
\lambda(t)=\dfrac{\Gamma(c)}{\Gamma(a)\Gamma(b)\Gamma(c-a-b+1)}
t^{b-1}(1-t)^{c-a-b}{\,}_{2}F_1\left(\!\!\!\!
\begin{array}{cll}&\displaystyle c-a,\quad 1-a
\\
&\displaystyle c-a-b+1
\end{array};1-t\right),
\end{align*}
then the integral operator \eqref{eq-weighted-integralOperator}
defined by the above weight function $\lambda(t)$ is the known
as generalized Hohlov operator denoted by
$\mathcal{H}_{a,\,b,\,c}^\delta$. This integral operator was
considered in the work of A. Ebadian \cite{Aghalary} (see also \cite{DeviGenlStar}). When
$\delta=1$, the reduced integral transform was introduced by Y.
C. Kim and F. Ronning \cite{KimRonning} and studied by several
authors later. The operator $\mathcal{H}_{a,\,b,\,c}^\delta$, with
$a=1$ is the generalized Carlson-Shaffer operator
($\mathcal{L}_{b,\,c}^\delta$) \cite{CarlsonShaffer}.

Using the above operators the following results are obtained.
\begin{theorem}\label{Thm-Generalized-Convex-Hohlov}

Let $a,b,c>0$, $\gamma\geq0$ $(\mu\geq,\nu\geq0)$ and
$1-\frac{1}{\delta}\leq\zeta\leq1-\frac{1}{2\delta}$. Let
$\beta\!<\!1$ satisfy

{\small{
\begin{align}\label{eq-Generalized-convex-beta-Hohlov}
\dfrac{\beta}{1\!-\!\beta}=-\dfrac{\Gamma(c)}{\Gamma(a)\Gamma(b)\Gamma(c\!-\!a\!-\!b\!+\!1)}\int_0^1\!\! t^{b-1}(1\!-\!t)^{c-a-b}
{\,}_{2}F_1\left(\!\!\!\!\!\!
\begin{array}{cll}&\displaystyle c\!-\!a,{\,} 1\!-\!a
\\
&\displaystyle c\!-\!a\!-\!b\!+\!1
\end{array};1\!-\!t\right)
q_{\mu,\nu}^\delta(t)dt,
\end{align}}}
where $q_{\mu,\nu}^\delta(t)$ is defined by the differential
equation \eqref{eq-Gener:Convex-q:gamma>0} for $\gamma>0$, and
\eqref{eq-generalized:convex-q:gamma0} for $\gamma=0$. Then for
$f(z)\in\mathcal{W}_\beta^\delta(\alpha,\gamma)$, the function
$\mathcal{H}_{a,\,b,\,c}^\delta(f)(z)$ belongs to the class
$\mathcal{C}_\delta(\zeta)$, whenever
\begin{enumerate}[{\rm(i)}]
  \item
\begin{align*}
&b\leq\min\left\{\dfrac{1}{4}\left(\dfrac{1}{\mu}-3+\delta(3-2\zeta)\right)\,,\,
\dfrac{2}{\left(\delta+1/\mu\right)}\left(\dfrac{(2\delta-1)}{\mu}-\delta+1\right)\right\}\quad {\rm and}\\
&c\geq a+b+2\quad{\rm for}\quad \gamma>0\,\, (1/2\leq\mu\leq1\leq\nu)\quad{\rm and}\quad1\leq\delta\leq2,
\end{align*}
  \item
\begin{align*}
&b\leq\min\left\{\dfrac{1}{2}\left(\dfrac{1}{\alpha}+\delta-2\right)\,,\,
\dfrac{\delta\left(\frac{1}{\alpha}-1\right)}{\left(\frac{1}{\alpha}+\delta-1\right)}\,,\,
\dfrac{1}{4}\left(\dfrac{3}{\alpha}+\delta-6\right)\right\}\quad{\rm and}\\
&c\geq a+b+3\quad{\rm for}\quad 1/2\leq\alpha\leq1,\,\,\gamma=0\,\,
{\rm and}\,\,\delta\geq3.
\end{align*}
\end{enumerate}
\end{theorem}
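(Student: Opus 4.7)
The plan is to reduce the statement to a direct application of Theorem \ref{Thm-Gener:Convex-Generalized-hypergeometric-fn:gamma>0} (for case (i)) and Theorem \ref{Thm-Gener:Convex-Generalized-hypergeometric-fn:gamma0} (for case (ii)), by recognizing the weight defining $\mathcal{H}_{a,b,c}^\delta$ as a specialization of the hypergeometric-type weight \eqref{eq-Generalized-hypergeometric-fn}. First I would identify $A=a$, $B=b$, $C=c$ and set
\begin{align*}
\omega(1-t) := {}_2F_1\!\left(c-a,\, 1-a;\, c-a-b+1;\, 1-t\right) = \sum_{n=0}^{\infty}\frac{(c-a)_n(1-a)_n}{(c-a-b+1)_n\, n!}(1-t)^n.
\end{align*}
The constant $K = \Gamma(c)/[\Gamma(a)\Gamma(b)\Gamma(c-a-b+1)]$ is exactly the one that enforces $\int_0^1 \lambda(t)\, dt = 1$, via the Euler integral representation of Gauss's hypergeometric function. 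The hypothesis $c \geq a+b+2$ (resp.\ $c \geq a+b+3$) ensures $c-a-b+1 > 0$, and the factor $(1-t)^{c-a-b}$ guarantees $\lambda(1)=0$, so the framework of Section \ref{Sec-Gener:Convex-Suff:Cond} applies.

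Next I would verify that the coefficients $x_n = (c-a)_n(1-a)_n/[(c-a-b+1)_n\, n!]$ are non-negative, so that $\omega(1-t)$ has the form required in \eqref{eq-Generalized-hypergeometric-fn}. Since $c \geq a+b+2 > a$, the Pochhammer symbol $(c-a)_n$ is strictly positive, and the denominator is positive by the same hypothesis. The only delicate factor is $(1-a)_n$, which is non-negative for $0 < a \leq 1$, vanishes for $n \geq a$ when $a$ is a positive integer, and otherwise requires a Kummer transformation rewriting the $_2F_1$ as a series with manifestly non-negative coefficients. This is the main technical check; once it is in place, $\omega(1-t)$ fits the form required in \eqref{eq-Generalized-hypergeometric-fn}.

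Finally I would observe that the remaining hypotheses of the two earlier theorems match the present statement verbatim. In case (i) the bound on $b$ is exactly the bound on $B$ in Theorem \ref{Thm-Gener:Convex-Generalized-hypergeometric-fn:gamma>0}, together with $C \geq A+B+2$, $1/2\leq\mu\leq 1\leq\nu$, $1\leq\delta\leq 2$, and the implicit assumption $(2-\delta)/\mu \geq \delta/\nu$ used in that theorem; in case (ii) the bounds on $b$ reproduce the triple minimum in Theorem \ref{Thm-Gener:Convex-Generalized-hypergeometric-fn:gamma0}, together with $C \geq A+B+3$ and $\delta\geq 3$. The relation \eqref{eq-Generalized-convex-beta-Hohlov} is precisely \eqref{Beta-Cond-Generalized:Convex} specialized to the Hohlov weight, and $q_{\mu,\nu}^\delta$ (resp.\ $q_{0,\alpha}^\delta$) is the same auxiliary function already used. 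Invoking Theorem \ref{Thm-Gener:Convex-Generalized-hypergeometric-fn:gamma>0} in case (i) and Theorem \ref{Thm-Gener:Convex-Generalized-hypergeometric-fn:gamma0} in case (ii) then immediately gives $\mathcal{H}_{a,b,c}^\delta(f) \in \mathcal{C}_\delta(\zeta)$. The expected obstacle is entirely the coefficient-positivity check on $\omega$; everything after that is a bookkeeping exercise matching parameters.
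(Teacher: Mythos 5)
Your proposal follows exactly the paper's route: the paper's entire proof of this theorem consists of choosing $K=\Gamma(c)/(\Gamma(a)\Gamma(b)\Gamma(c-a-b+1))$ and $\omega(1-t)={}_2F_1(c-a,1-a;c-a-b+1;1-t)$ in Theorems \ref{Thm-Gener:Convex-Generalized-hypergeometric-fn:gamma>0} and \ref{Thm-Gener:Convex-Generalized-hypergeometric-fn:gamma0}, which is precisely your reduction with $A=a$, $B=b$, $C=c$.

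The one place where you go beyond the paper is the check you rightly call the main technical one: the hypothesis $x_n\ge 0$ in \eqref{eq-Generalized-hypergeometric-fn}. The paper never verifies it. Your own treatment of it, however, is partly wrong. For $0<a\le 1$ the coefficients $x_n=(c-a)_n(1-a)_n/((c-a-b+1)_n\,n!)$ are indeed non-negative (both $c-a>0$ and $c-a-b+1>0$ follow from $c\ge a+b+2$). But for $a$ a positive integer with $a\ge 2$, the series terminates while its nonzero terms alternate in sign: $(1-a)_n$ has sign $(-1)^n$ for $n<a$, so for instance $a=2$ gives $x_1=-(c-2)/(c-b-1)<0$. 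And the Euler transformation ${}_2F_1(c-a,1-a;c-a-b+1;1-t)=t^{a-b}\,{}_2F_1(1-b,c-b;c-a-b+1;1-t)$ (which turns the prefactor $t^{b-1}$ into $t^{a-1}$, i.e.\ swaps the roles of $a$ and $b$) only relocates the problematic Pochhammer factor to $(1-b)_n$, so it rescues positivity only when $b\le 1$. Hence the reduction---yours and the paper's alike---really requires something like $\min(a,b)\le 1$; for $a>1$ and $b>1$ coefficient positivity can genuinely fail, and this restriction appears neither in your write-up nor in the theorem as stated in the paper. Apart from this shared defect, your parameter bookkeeping (including your observation that the hypothesis $(2-\delta)/\mu\ge\delta/\nu$ of Theorem \ref{Thm-Gener:Convex-Generalized-hypergeometric-fn:gamma>0} is tacitly assumed in case (i)) matches the paper's proof.
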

\begin{proof}
Choosing

\begin{align*}
K=\dfrac{\Gamma(c)}{\Gamma(a)\Gamma(b)\Gamma(c\!-\!a\!-\!b\!+\!1)}\quad{\rm and}\quad
\omega(1-t)=
{\,}_{2}F_1\left(\!\!\!\!\!
\begin{array}{cll}&\displaystyle c-a,{\,} 1-a
\\
&\displaystyle c-a-b+1
\end{array};1-t\right),
\end{align*}
in Theorem
\ref{Thm-Gener:Convex-Generalized-hypergeometric-fn:gamma>0}
and \ref{Thm-Gener:Convex-Generalized-hypergeometric-fn:gamma0}
for the case $\gamma>0$ and $\gamma=0$, respectively to get the
required result.
\end{proof}
For $a=1$, Theorem \ref{Thm-Generalized-Convex-Hohlov} lead to
the following particular cases which are of independent interest.
\begin{corollary}
Let $b, c>0$, $\gamma\geq0$ $(\mu\geq0, \nu\geq0)$ and
$1-\frac{1}{\delta}\leq\zeta\leq1-\frac{1}{2\delta}$. Let
$\beta<1$ satisfy

\begin{align*}
\dfrac{\beta}{(1-\beta)}=-\dfrac{\Gamma(c)}{\Gamma(b)\Gamma(c-b)}\int_0^1
t^{b-1}(1-t)^{c-b-1}q_{\mu,\nu}^\delta(t)dt,
\end{align*}
where $q_{\mu,\nu}^\delta(t)$ is defined by the differential
equation \eqref{eq-Gener:Convex-q:gamma>0} for $\gamma>0$, and
\eqref{eq-generalized:convex-q:gamma0} for $\gamma=0$. Then for
$f(z)\in\mathcal{W}_\beta^\delta(\alpha,\gamma)$, the function
$\mathcal{L}_{b,\,c}^\delta(f)(z)$ belongs to the class
$\mathcal{C}_\delta(\zeta)$, whenever
\begin{enumerate}[{\rm(i)}]
  \item
\begin{align*}
&b\leq\min\left\{\dfrac{1}{4}\left(\dfrac{1}{\mu}-3+\delta(3-2\zeta)\right)\,,\,
\dfrac{2}{\left(\delta+1/\mu\right)}\left(\dfrac{(2\delta-1)}{\mu}-\delta+1\right)\right\}\quad{\rm and}\\
&c\geq b+3\quad {\rm for}\,\,\, \gamma>0\, (1/2\leq\mu\leq1\leq\nu)\,\, and\,\, 1\leq\delta\leq2
\end{align*}
  \item
\begin{align*}
&b\leq\min\left\{\dfrac{1}{2}\left(\dfrac{1}{\alpha}+\delta-2\right)\,,\,
\dfrac{\delta\left(\frac{1}{\alpha}-1\right)}{\left(\frac{1}{\alpha}+\delta-1\right)}\,,\,
\dfrac{1}{4}\left(\dfrac{3}{\alpha}+\delta-6\right)\right\}\quad{\rm and}\\
&c\geq b+4\quad {\rm for}\,\,\,1/2\leq\alpha\leq1,\,\,\gamma=0\,\, {\rm and}\,\, \delta\geq3
\end{align*}
\end{enumerate}
\end{corollary}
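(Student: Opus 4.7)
The plan is to derive this corollary as a direct specialization of Theorem \ref{Thm-Generalized-Convex-Hohlov} by setting $a = 1$. Since $\mathcal{L}_{b,c}^\delta = \mathcal{H}_{1,b,c}^\delta$ by the discussion preceding the theorem, no independent computation is needed; the task is to verify that all three ingredients (the weight $\lambda(t)$, the parameter condition on $\beta$, and the constraints on $b$ and $c$) collapse correctly when $a=1$ is plugged in.

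First, I would simplify the weight function. With $a=1$ we have $\Gamma(a)=1$, and the hypergeometric factor
\begin{align*}
{\,}_{2}F_1\!\left(c-a,\,1-a;\;c-a-b+1;\;1-t\right)={\,}_{2}F_1\!\left(c-1,\,0;\;c-b;\;1-t\right)=1,
\end{align*}
since the Pochhammer symbol $(0)_n$ vanishes for $n\geq 1$. Also $(1-t)^{c-a-b}=(1-t)^{c-b-1}$ and $\Gamma(c-a-b+1)=\Gamma(c-b)$. Therefore the weight in Theorem \ref{Thm-Generalized-Convex-Hohlov} becomes
\begin{align*}
\lambda(t)=\frac{\Gamma(c)}{\Gamma(b)\Gamma(c-b)}\,t^{b-1}(1-t)^{c-b-1},
\end{align*}
which is exactly the weight defining $\mathcal{L}_{b,c}^\delta$ in \eqref{eq-weighted-integralOperator}.

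Second, I would substitute $a=1$ into the $\beta$-condition \eqref{eq-Generalized-convex-beta-Hohlov}. The same simplifications above turn it into
\begin{align*}
\frac{\beta}{1-\beta}=-\frac{\Gamma(c)}{\Gamma(b)\Gamma(c-b)}\int_0^1 t^{b-1}(1-t)^{c-b-1}\,q_{\mu,\nu}^\delta(t)\,dt,
\end{align*}
matching the hypothesis of the corollary for both the $\gamma>0$ and $\gamma=0$ branches (using $q_{\mu,\nu}^\delta$ and $q_{0,\alpha}^\delta$ respectively). Then for the structural inequalities on $c$, the constraint $c\geq a+b+2$ from part (i) of Theorem \ref{Thm-Generalized-Convex-Hohlov} becomes $c\geq b+3$, and $c\geq a+b+3$ from part (ii) becomes $c\geq b+4$; these are exactly the conditions stated in the corollary. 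The bounds on $b$ are not affected by the specialization $a=1$ and carry over verbatim.

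Third, I would invoke Theorem \ref{Thm-Generalized-Convex-Hohlov} under these verified hypotheses to conclude that for $f\in\mathcal{W}_\beta^\delta(\alpha,\gamma)$, the function $\mathcal{H}_{1,b,c}^\delta(f)(z)=\mathcal{L}_{b,c}^\delta(f)(z)$ belongs to $\mathcal{C}_\delta(\zeta)$ in each of the two cases. There is no genuine obstacle in this argument; the only care required is to ensure that when $a=1$ none of the Gamma factors or ranges degenerate outside the admissible region, which is immediate since $b,c-b>0$ under the stated hypotheses. The corollary therefore follows without additional estimates.
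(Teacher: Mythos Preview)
Your proposal is correct and matches the paper's approach exactly: the paper simply states that this corollary follows from Theorem~\ref{Thm-Generalized-Convex-Hohlov} by taking $a=1$, and you have carried out precisely that specialization, correctly noting that ${}_2F_1(c-1,0;c-b;1-t)=1$ collapses the weight to the Carlson--Shaffer kernel and that the constraints $c\geq a+b+2$ and $c\geq a+b+3$ become $c\geq b+3$ and $c\geq b+4$. If anything, you have supplied more detail than the paper itself.
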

\begin{corollary}
Let $b,c>0$, $\gamma\geq0$ $(\mu\geq0,\nu\geq0)$ and
$1-\frac{1}{\delta}\leq\zeta\leq1-\frac{1}{2\delta}$. Let
$\beta_0<\beta<1$, where
\begin{align*}
\beta_0=1-\dfrac{1}{\left(1-{\,}
{\,}_6F_5
\left(
\begin{array}{cll}&\displaystyle \quad\quad 1,b,(1+\delta),(2-\xi),\dfrac{\delta}{\mu},\dfrac{\delta}{\nu},
\\
&\displaystyle c,\delta,(1-\xi),\left(1+\dfrac{\delta}{\mu}\right),\left(1+\dfrac{\delta}{\nu}\right)
\end{array}{\,};{\,}-1\right)
\right)}.
\end{align*}
Then, for $f\in\mathcal{W}_{\beta}^\delta(\alpha,\gamma)$, the
function
$\mathcal{L}_{b,c}^\delta(f)(z)\in\mathcal{C}_\delta(\zeta)$,
whenever
\begin{enumerate}[{\rm(i)}]
  \item
\begin{align*}
&b\leq\min\left\{\dfrac{1}{4}\left(\dfrac{1}{\mu}-3+\delta(3-2\zeta)\right)\,,\,
\dfrac{2}{\left(\delta+1/\mu\right)}\left(\dfrac{(2\delta-1)}{\mu}-\delta+1\right)\right\}\quad{\rm and}\\
&c\geq b+3\quad {\rm for}\,\,\, \gamma>0\, (1/2\leq\mu\leq1\leq\nu)\,\, and\,\, 1\leq\delta\leq2
\end{align*}
  \item
\begin{align*}
&b\leq\min\left\{\dfrac{1}{2}\left(\dfrac{1}{\alpha}+\delta-2\right)\,,\,
\dfrac{\delta\left(\frac{1}{\alpha}-1\right)}{\left(\frac{1}{\alpha}+\delta-1\right)}\,,\,
\dfrac{1}{4}\left(\dfrac{3}{\alpha}+\delta-6\right)\right\}\quad{\rm and}\\
&c\geq b+4\quad {\rm for}\,\,\,1/2\leq\alpha\leq1,\,\,\gamma=0\,\, {\rm and}\,\, \delta\geq3
\end{align*}
\end{enumerate}
\end{corollary}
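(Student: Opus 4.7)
The plan is to deduce this corollary directly from the preceding one, which handles the same Carlson--Shaffer operator $\mathcal{L}_{b,c}^\delta$ under identical parameter restrictions on $b$, $c$, $\mu$, $\nu$, $\delta$, $\zeta$. The parametric hypotheses carry over verbatim; only the condition on $\beta$ needs to be rewritten. In the preceding corollary, $\beta$ is pinned down implicitly by an integral identity involving $q_{\mu,\nu}^\delta$, whereas here I want to exhibit the explicit number $\beta_0$ as a single evaluation of a generalized hypergeometric function and then widen the admissible range to the half-open interval $\beta_0<\beta<1$.

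The central computational step is to substitute the hypergeometric representation \eqref{eq-gener:convex-q:hyper:series} of $q_{\mu,\nu}^\delta(t)$ into the integral
\[
I \;:=\; \dfrac{\Gamma(c)}{\Gamma(b)\Gamma(c-b)}\int_0^1 t^{b-1}(1-t)^{c-b-1}\, q_{\mu,\nu}^\delta(t)\, dt,
\]
interchange sum and integral (justified by absolute convergence of the $_5F_4$ on compact subsets of $[0,1)$ and integrability of the beta weight), and apply the Eulerian identity
\[
\int_0^1 t^{b+n-1}(1-t)^{c-b-1}\, dt \;=\; \dfrac{\Gamma(b)\Gamma(c-b)}{\Gamma(c)}\cdot\dfrac{(b)_n}{(c)_n}.
\]
Each term of the $_5F_4$ series for $q_{\mu,\nu}^\delta$ acquires an additional Pochhammer ratio $(b)_n/(c)_n$, bumping the order up to $_6F_5$ and producing precisely
\[
I \;=\; {}_6F_5\!\left(1,\, b,\, 1+\delta,\, 2-\xi,\, \tfrac{\delta}{\mu},\, \tfrac{\delta}{\nu};\; c,\, \delta,\, 1-\xi,\, 1+\tfrac{\delta}{\mu},\, 1+\tfrac{\delta}{\nu};\; -1\right).
\]
The defining equation $\beta/(1-\beta)=-I$ from the previous corollary rearranges to $1-\beta=1/(1-I)$, that is, $\beta=1-1/(1-I)$, which is exactly the stated $\beta_0$.

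The enlargement to $\beta_0<\beta<1$ follows from the standard monotonicity of the family $\{\mathcal{W}_\beta^\delta(\alpha,\gamma)\}$ in $\beta$: the defining inequality $\mathrm{Re}\,e^{i\phi}(\cdots-\beta)>0$ only becomes harder to satisfy as $\beta$ grows, so $\mathcal{W}_\beta^\delta(\alpha,\gamma)\subset\mathcal{W}_{\beta_0}^\delta(\alpha,\gamma)$ whenever $\beta>\beta_0$, and the inclusion $\mathcal{L}_{b,c}^\delta(\mathcal{W}_{\beta_0}^\delta(\alpha,\gamma))\subset\mathcal{C}_\delta(\zeta)$ from the preceding corollary transfers at once. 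The main technical point requiring care is the behaviour of the $_6F_5$ at $z=-1$: under the hypotheses $c\geq b+3$ (resp.\ $c\geq b+4$ in the $\gamma=0$ case) the sum of bottom parameters exceeds the sum of top parameters by at least two, which ensures absolute convergence at $|z|=1$ and keeps $1-I$ bounded away from zero, so that $\beta_0$ is well defined and strictly less than $1$. No other obstacle arises; the proof is essentially a change of representation plus an invocation of the preceding corollary.
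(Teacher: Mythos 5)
Your proof is correct and follows essentially the same route as the paper: reduce to the Carlson--Shaffer case (the paper does this by putting $a=1$ in Theorem \ref{Thm-Generalized-Convex-Hohlov}), expand $q_{\mu,\nu}^\delta(t)$ via its ${}_5F_4$ representation \eqref{eq-gener:convex-q:hyper:series}, integrate term by term against the Beta weight to pick up the factors $(b)_n/(c)_n$ and obtain $\beta_0/(1-\beta_0)=-{}_6F_5(\cdots;-1)$, which rearranges to the stated $\beta_0$. Your explicit treatment of the enlargement to $\beta_0<\beta<1$ via the nesting $\mathcal{W}_{\beta}^\delta(\alpha,\gamma)\subset\mathcal{W}_{\beta_0}^\delta(\alpha,\gamma)$ (valid since $H(0)=1$ forces $\cos\phi>0$) is a detail the paper leaves implicit, but it is exactly the intended argument.
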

\begin{proof}
Putting $a=1$ in \eqref{eq-Generalized-convex-beta-Hohlov} and
on further using \eqref{eq-gener:convex-q:hyper:series} will
give
{\small{
\begin{align*}
\dfrac{\beta}{1\!-\!\beta}=-\dfrac{\Gamma(c)}{\Gamma{(b)}\Gamma{(c\!-\!b)}}\int_0^1\!\!t^{b-1}(1\!-\!t)^{c-b-1}
{\,}_5F_4\!
\left(\!\!\!\!\!\!\!\!
\begin{array}{cll}&\displaystyle \quad\quad 1,(1+\delta),(2-\xi),\dfrac{\delta}{\mu},\dfrac{\delta}{\nu}
\\
&\displaystyle \delta,(1\!-\!\xi),\left(1\!+\!\dfrac{\delta}{\mu}\right),\left(1\!+\!\dfrac{\delta}{\nu}\right)
\end{array}{\,};{\,}-t\right)dt
\end{align*}}}
or equivalently,

{\small{
\begin{align*}
\dfrac{\beta}{1-\beta}=-\dfrac{\Gamma(c)}{\Gamma{(b)}\Gamma{(c\!-\!b)}}\int_0^1\!\!t^{b-1}(1\!-\!t)^{c-b-1}
\left(\sum_{n=0}^{\infty}\dfrac{(1+\delta)_n(2-\xi)_n\left(\dfrac{\delta}{\mu}\right)_n\left(\dfrac{\delta}{\nu}\right)_n
(-1)^n}{(\delta)_n(1-\xi)_n\left(n+\dfrac{\delta}{\nu}\right)_n\left(n+\dfrac{\delta}{\mu}\right)_n} t^n\right)dt.
\end{align*}}}
%
Now a simple computation leads to  
\begin{align*}
\dfrac{\beta}{1-\beta}
=-{\,}_6F_5\!
\left(\begin{array}{cll}&\displaystyle \quad\quad 1,b,(1+\delta),(2-\xi),\dfrac{\delta}{\mu},\dfrac{\delta}{\nu}
\\
&\displaystyle c,\delta,(1-\xi),\left(1+\dfrac{\delta}{\mu}\right),\left(1+\dfrac{\delta}{\nu}\right)
\end{array}{\,};{\,}-1\right).
\end{align*}
Thus, applying Theorem \ref{Thm-Generalized-Convex-Hohlov} will
give the required result.
\end{proof}
Consider
\begin{align}\label{eq-komatu_operator}
\lambda(t)=\dfrac{(1+k)^p}{\Gamma(p)}t^{k}\left(\log\dfrac{1}{t}\right)^{p-1},
\quad \delta\geq 0\quad k>-1.
\end{align}
Then the integral operator \eqref{eq-weighted-integralOperator}
defined by the above weight function $\lambda(t)$ is the known
as generalized Komatu operator denoted by $(F_{k,\,p}^\delta)$.
This integral operator was considered in the work of A. Ebadian
\cite{Aghalary}. When $\delta=1$, the operator is reduced to
the one introduced by Y. Komatu \cite{komatu}.

Now, we state the following result.
\begin{theorem}
Let $\gamma\geq0$ $(\mu\geq,\nu\geq0)$, $k>-1$, $p\geq1$ and
$1-\frac{1}{\delta}\leq\zeta\leq1-\frac{1}{2\delta}$. Let
$\beta\!<\!1$ satisfy \eqref{Beta-Cond-Generalized:Convex},
where $\lambda(t)$ is given in \eqref{eq-komatu_operator}. Then
for $f(z)\in\mathcal{W}_\beta^\delta(\alpha,\gamma)$, the
function $F_{k,p}^\delta(f)(z)\in\mathcal{C}_\delta(\zeta)$,
whenever

\begin{enumerate}[{\rm(i)}]
  \item {\small{
\begin{align*}
&-1<k\leq\min\left\{\dfrac{1}{4}\left(\dfrac{1}{\mu}-3+\delta(3-2\zeta)\right)-1\,,\,
\dfrac{2}{\left(\delta+1/\mu\right)}\left(\dfrac{(2\delta-1)}{\mu}-\delta+1\right)-1\right\}\\
&{\rm and}\quad p\geq 1\,\, {\rm for}\,\, \gamma>0\,(1/2\leq\mu\leq1\leq\nu)\,\, {\rm and}\,\,
1\leq\delta\leq2,
\end{align*}}}
  \item {\small{
\begin{align*}
&-1<k\leq\min\left\{\dfrac{1}{2}\left(\dfrac{1}{\alpha}+\delta-4\right)\,,\,
\dfrac{\delta\left(\frac{1}{\alpha}-1\right)}{\left(\frac{1}{\alpha}+\delta-1\right)}-1\,,\,
\dfrac{1}{4}\left(\dfrac{3}{\alpha}+\delta-10\right)\right\}\\
&{\rm and}\quad p\geq 2\,\, {\rm for}\,\, 1/2\leq\alpha\leq1,\,\, \gamma=0\,\,{\rm  and}\,\,\delta\geq3.
\end{align*}}}
\end{enumerate}
\end{theorem}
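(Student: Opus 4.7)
\medskip

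\noindent\textbf{Proof proposal.} The plan is to invoke Theorem~\ref{Thm:Main-Gener:Convex-Decreas} directly, mirroring the strategy used in Theorem~\ref{Thm-Gener:Convex-Generalized-hypergeometric-fn:gamma>0} and Theorem~\ref{Thm-Gener:Convex-Generalized-hypergeometric-fn:gamma0}. Recall that in Section~\ref{Sec-Gener:Convex-Suff:Cond}, monotonicity of the auxiliary function $g(t)$ (for $\gamma>0$) was reduced to the pointwise inequality \eqref{eq-Gener:convex-Main:equiv(Cond3)} in $\lambda(t)$, $t\lambda'(t)$, $t^2\lambda''(t)$, and analogously monotonicity of $a(t)$ (for $\gamma=0$) reduced to \eqref{eq-Gener:convex-Main:equiv(Cond3)gamma0}. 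Thus the entire task is to check these two explicit inequalities when $\lambda(t)=\tfrac{(1+k)^p}{\Gamma(p)}\,t^{k}(\log(1/t))^{p-1}$.

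\medskip

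\noindent First I would compute, writing $L:=\log(1/t)$ and $K:=(1+k)^p/\Gamma(p)$,
\begin{align*}
\lambda(t) &= K t^{k} L^{p-1}, \\
t\lambda'(t) &= K t^{k} L^{p-2}\bigl[kL-(p-1)\bigr], \\
t^2\lambda''(t) &= K t^{k} L^{p-3}\bigl\{k(k-1)L^2-(2k-1)(p-1)L+(p-1)(p-2)\bigr\}.
\end{align*}
Substituting into \eqref{eq-Gener:convex-Main:equiv(Cond3)} and factoring out the positive quantity $K t^{k} L^{p-3}$ reduces the $\gamma>0$ inequality to a polynomial expression in $L$ of degree at most three, whose coefficients are linear combinations of $1,\,L,\,L/h(t),\,1/h(t)$, with $h(t)=(\delta-1)L/\mu+(1+2\xi)$. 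The same substitution into \eqref{eq-Gener:convex-Main:equiv(Cond3)gamma0} produces the analogous reduction for the $\gamma=0$ case, with $\alpha$ and $p(t)$ in place of $\mu$ and $h(t)$.

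\medskip

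\noindent The next step is to isolate sufficient conditions on $k$ and $p$ by requiring each power of $L$ to contribute a non-negative term, exactly as the analysis culminating in \eqref{eq-Gener:convex-Main:ineq-gamma>0(2)},\eqref{eq-Gener:convex-Main:ineq-gamma>0(11)},\eqref{eq-Gener:convex-Main:ineq-gamma>0(12)},\eqref{eq-Gener:convex-Main:ineq-gamma>0(13)} (respectively \eqref{eq-Gener:convex-Main:ineq-gamma0(11)},\eqref{eq-Gener:convex-Main:ineq-gamma0:11},\eqref{eq-Gener:convex-Main:ineq-gamma0:12},\eqref{eq-Gener:convex-Main:ineq-gamma0:13}) did in the hypergeometric case. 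The identification is that $k$ here plays the role of $B-1$ there; this explains the shift by $-1$ visible in every bound on $k$ in the statement, compared with the bounds on $b$ in Theorem~\ref{Thm-Generalized-Convex-Hohlov}. The hypothesis $p\geq 1$ (resp.\ $p\geq 2$) ensures $\lambda(1)=0$ (resp.\ $\lambda'(1)=0$), so that the reduction ``$k(1)=0\Rightarrow$ it suffices to prove $k$ increasing'' used before \eqref{eq-Gener:convex-Main:equiv(Cond3)} (resp.\ \eqref{eq-Gener:convex-Main:equiv(Cond3)gamma0}) remains valid, and also guarantees the $L^{p-2}$ and $L^{p-3}$ factors do not blow up at $t=1$.

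\medskip

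\noindent The main obstacle will be that the Komatu weight lacks the $(1-t)^{C-A-B}$ factor exploited in Theorem~\ref{Thm-Gener:Convex-Generalized-hypergeometric-fn:gamma>0}, so the elementary bound $(1-t)\leq \tfrac{1+t}{2}\log(1/t)$ used there is not directly applicable. Instead one works entirely in the variable $L\in(0,\infty)$, and the key technical point is showing that the coefficients of $L^2$, $L$, and $1$ in the reduced polynomial are each non-negative, uniformly in the residual $1/h(t)$ factors. Once this is verified under the stated bounds on $k$ (which exactly make the leading coefficients non-negative) together with $\tfrac{1}{2}\leq\mu\leq 1\leq\nu$, $1\leq\delta\leq 2$ in case~(i), and $\tfrac{1}{2}\leq\alpha\leq 1$, $\delta\geq 3$ in case~(ii), Theorem~\ref{Thm:Main-Gener:Convex-Decreas} gives $\mathcal{M}_{\Pi_{\mu,\nu}^\delta}(h_\xi)(z)\geq 0$, and then Theorem~\ref{Thm-Gener:Convex-Mainresult} delivers $F_{k,p}^\delta(f)\in\mathcal{C}_\delta(\zeta)$ for every $f\in\mathcal{W}_\beta^\delta(\alpha,\gamma)$.
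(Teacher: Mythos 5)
There is a genuine gap, and it comes in two parts. First, the step you flag as ``the main obstacle'' and defer --- showing that, after substituting $\lambda(t)=Kt^{k}L^{p-1}$, $t\lambda'(t)=Kt^{k}L^{p-2}[kL-(p-1)]$, $t^{2}\lambda''(t)=Kt^{k}L^{p-3}[k(k-1)L^{2}-(2k-1)(p-1)L+(p-1)(p-2)]$ into \eqref{eq-Gener:convex-Main:equiv(Cond3)} and \eqref{eq-Gener:convex-Main:equiv(Cond3)gamma0}, the resulting expression in $L$ is non-negative under the stated bounds on $k$ --- is precisely the entire content of the theorem, and you never carry it out. You assert that the stated bounds on $k$ ``exactly make the leading coefficients non-negative,'' but no coefficient is ever computed or compared with the bounds; as written, the proposal is a plan, not a proof. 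Second, the obstacle motivating this detour is illusory. The Komatu weight does not ``lack'' the $(1-t)^{C-A-B}$ factor: one simply writes
\begin{align*}
t^{k}\left(\log\tfrac{1}{t}\right)^{p-1}=t^{k}(1-t)^{p-1}\left(\frac{\log(1/t)}{1-t}\right)^{p-1},
\qquad
\frac{\log(1/t)}{1-t}=\sum_{n=0}^{\infty}\frac{(1-t)^{n}}{n+1},
\end{align*}
so that $\lambda(t)$ is exactly of the form \eqref{eq-Generalized-hypergeometric-fn} with $B=k+1$, $C-A-B=p-1$ and $\omega(1-t)=\left(\log(1/t)/(1-t)\right)^{p-1}$, a power series in $(1-t)$ with non-negative coefficients. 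This is the paper's whole proof: with that identification, Theorems \ref{Thm-Gener:Convex-Generalized-hypergeometric-fn:gamma>0} and \ref{Thm-Gener:Convex-Generalized-hypergeometric-fn:gamma0} apply verbatim (including the bound $(1-t)\leq\frac{1+t}{2}\log(1/t)$, which lives inside those theorems), and the bounds on $k$ are just the bounds on $B$ shifted by $1$ --- the correspondence you correctly spotted but then declined to exploit.

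Two smaller inaccuracies in your write-up are worth noting. Your claim that $p\geq 1$ ensures $\lambda(1)=0$ fails at $p=1$, where $\lambda(t)=(1+k)t^{k}$ and $\lambda(1)=1+k>0$; the vanishing-at-$1$ hypothesis genuinely needs $p>1$ (this is actually a defect of the paper's own statement as well, since matching $C\geq A+B+2$, resp.\ $C\geq A+B+3$, under $C-A-B=p-1$ would require $p\geq 3$, resp.\ $p\geq 4$, not $p\geq 1$, resp.\ $p\geq 2$). Also, your reduction should be routed through Theorem \ref{Thm:Main-Gener:Convex-Decreas} only after verifying its hypothesis \eqref{eq-Gener:Convex-Decre-Cond}; the chain in Section \ref{Sec-Gener:Convex-Suff:Cond} from \eqref{eq-Gener:Convex-Decre-Cond} down to \eqref{eq-Gener:convex-Main:equiv(Cond3)} uses intermediate sign conditions (e.g.\ \eqref{eq-Gener:convex-Main:equiv(Cond1)}, which needs $(2-\delta)/\mu\geq\delta/\nu$), and these must be re-checked or assumed; your sketch silently drops the condition $(2-\delta)/\mu\geq\delta/\nu$ that appears in case (i) of the hypergeometric theorem.
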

\begin{proof}
Letting $(C-A-B)=p-1$, $B=k+1$ and
$\omega(1-t)=\left(\frac{\log(1/t)}{(1-t)}\right)^{p-1}$.
Therefore $\lambda(t)$ given in
\eqref{eq-Generalized-hypergeometric-fn} can be represented as

\begin{align*}
\lambda(t)=Kt^{k}(1-t)^{p-1}\omega(1-t),\quad{\mbox{where}\quad}  K=\dfrac{(1+k)^p}{\Gamma(p)}.
\end{align*}
Now, by the given hypothesis the result directly follows from
Theorem
\ref{Thm-Gener:Convex-Generalized-hypergeometric-fn:gamma>0}
and \ref{Thm-Gener:Convex-Generalized-hypergeometric-fn:gamma0}
for the case $\gamma>0$ and $\gamma=0$, respectively.
\end{proof}


\end{document}